\theoremstyle{plain}
\newtheorem{thm}{Theorem}[section]
\newtheorem{prop}[thm]{Proposition}
\newtheorem{lem}[thm]{Lemma}
\newtheorem{cor}[thm]{Corollary}
\newtheorem{conj}[thm]{Conjecture}
\theoremstyle{definition}
\newtheorem{dfn}[thm]{Definition}
\newtheorem{exmp}[thm]{Example}
\newtheorem{rem}[thm]{Remark}
\newtheorem{dfns-rems}[thm]{Definitions and Remarks}
\newtheorem{notas-rems}[thm]{Notations and Remarks}
\newtheorem{exmps-rems}[thm]{Examples and Remarks}
\begin{document}


\title[Intersecting faces of a simplicial complex via algebraic shifting]{Intersecting faces of a simplicial complex via algebraic shifting}


\author[S. A. Seyed Fakhari]{S. A. Seyed Fakhari}

\address{S. A. Seyed Fakhari, Department of Mathematical Sciences,
Sharif University of Technology, P.O. Box 11155-9415, Tehran, Iran.}

\email{fakhari@ipm.ir}

\urladdr{http://math.ipm.ac.ir/fakhari/}


\begin{abstract}

A family $\mathcal{A}$ of sets is {\it $t$-intersecting} if the cardinality of the intersection of every pair of sets in $\mathcal{A}$ is at least $t$, and is an {\it $r$-family} if every set in $\mathcal{A}$ has cardinality $r$.
A well-known theorem of Erd\H{o}s, Ko, and Rado bounds the cardinality
of a $t$-intersecting $r$-family of subsets of an $n$-element set, or equivalently of $(r-1)$-dimensional faces of a simplex with $n$ vertices. As a generalization of the    Erd\H{o}s-Ko-Rado theorem, Borg presented a conjecture concerning the size of a  $t$-intersecting $r$-family of faces of an arbitrary simplicial complex. He proved his conjecture for shifted complexes. In this paper we give a new proof for this result based on work of Woodroofe. Using algebraic shifting we verify Borg's conjecture in the case of sequentially Cohen-Macaulay $i$-near-cones for $t=i$.
\end{abstract}


\subjclass[2000]{05E45, 05D05}


\keywords{Simplicial complex, Erd\H{o}s-Ko-Rado theorem, Algebraic shifting, $i$-Near-cone, Sequentially Cohen-Macaulay simplicial complex}




\maketitle


\section{Introduction} \label{sec1}

Throughout this paper, the set of positive integers $\{1, 2, \ldots\}$ is denoted by $\mathbb{N}$. For $m, n \in \mathbb{N}, m \leq n$, the set $\{i\in
\mathbb{N} : m\leq i\leq n\}$ is denoted by $[m, n]$; for $m = 1$, we also write $[n]$.

Let $t\leq r$ be two natural numbers. A family $\mathcal{A}$ of sets is {\it $t$-intersecting} if the cardinality of the intersection of every pair of sets in $\mathcal{A}$ is at least $t$, and is an {\it $r$-family} if every set in $\mathcal{A}$ has cardinality $r$.
A well-known theorem of Erd\H{o}s, Ko, and Rado bounds the cardinality
of a $t$-intersecting $r$-family:

\begin{thm} \label{ekr1}
Assume that $t\leq r$ are two natural numbers. Let $n\geq(t+1)(r-t+1)$ and $\mathcal{A}$ be a $t$-intersecting $r$-family of subsets of $[n]$. Then $|\mathcal{A}| \leq {n-t\choose r-t}$.
\end{thm}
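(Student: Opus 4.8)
The plan is to prove the bound by the method of combinatorial \emph{shifting} (compression), first reducing an arbitrary $t$-intersecting $r$-family to a shifted one and then running an induction on $n$. I would recall the shift operators: for $1 \le p < q \le n$ and a family $\mathcal{A}$, replace each $A \in \mathcal{A}$ with $q \in A$, $p \notin A$ by $(A \setminus \{q\}) \cup \{p\}$ whenever that set is not already in $\mathcal{A}$. A routine check shows each such operator preserves the cardinality $|\mathcal{A}|$, the property of being an $r$-family, and the $t$-intersecting property; iterating the operators until none changes $\mathcal{A}$ yields a \emph{shifted} family of the same size. Hence it suffices to bound $|\mathcal{A}|$ when $\mathcal{A}$ is shifted.

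Next I would split the shifted family at the largest vertex $n$, writing $\mathcal{A}_0 = \{A \in \mathcal{A} : n \notin A\}$ and $\mathcal{A}_1 = \{A \setminus \{n\} : n \in A \in \mathcal{A}\}$, so that $|\mathcal{A}| = |\mathcal{A}_0| + |\mathcal{A}_1|$. Here $\mathcal{A}_0$ is a $t$-intersecting $r$-family on $[n-1]$, while $\mathcal{A}_1$ is a priori only an $(r-1)$-family. The key structural step is that shiftedness forces $\mathcal{A}_1$ to be $t$-intersecting: if $A', B' \in \mathcal{A}_1$ had $|A' \cap B'| \le t-1$, then, since $n \ge (t+1)(r-t+1) \ge 2r-t+1$, there is a vertex $i \in [n-1] \setminus (A' \cup B')$, and the shift $S_{in}$ places $A' \cup \{i\}$ in $\mathcal{A}$; this set meets $B' \cup \{n\} \in \mathcal{A}$ in exactly $|A' \cap B'| \le t-1$ points, contradicting the hypothesis. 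Applying the inductive bound to $\mathcal{A}_0$ (parameters $r,t$ on $[n-1]$) and to $\mathcal{A}_1$ (parameters $r-1,t$ on $[n-1]$) gives
\[
|\mathcal{A}| \le \binom{n-1-t}{r-t} + \binom{n-1-t}{r-t-1} = \binom{n-t}{r-t}
\]
by Pascal's identity, closing the step. The base case $r=t$ is immediate, since a $t$-intersecting family of $t$-sets consists of a single set, so $|\mathcal{A}| \le 1 = \binom{n-t}{0}$.

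I expect the main obstacle to be the \emph{sharpness of the threshold} $n \ge (t+1)(r-t+1)$. The induction on $n$ reduces $\mathcal{A}_0$ to a family on $[n-1]$, for which the bound requires $n-1 \ge (t+1)(r-t+1)$; the threshold for $\mathcal{A}_1$ is satisfied with room to spare, but the constraint coming from $\mathcal{A}_0$ is tight. Thus the argument above settles every case with $n > (t+1)(r-t+1)$ and reduces the whole theorem to the extremal case $n = (t+1)(r-t+1)$, where the induction hypothesis for the deletion $\mathcal{A}_0$ is no longer available. This boundary case is the genuine content of the Frank\-Wilson theorem, and I would isolate it and treat it by one of the sharper techniques: Frankl's finer analysis of the orbit structure of shifted families at the threshold, Wilson's eigenvalue estimate for the associated Johnson-type scheme, or the Ahlswede\-Khachatrian complete intersection theorem, which pins down the extremal size for all $n$ and specializes to the stated bound at $n = (t+1)(r-t+1)$.
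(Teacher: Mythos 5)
The paper never actually proves this statement: Theorem~\ref{ekr1} is quoted as a classical black box (restated as Theorem~\ref{ekr}) whose only role is to serve as a base case in the induction proving Lemma~\ref{shifted}, so there is no internal proof to compare yours against. Judged on its own terms, the elementary portion of your argument is sound: the $(p,q)$-compressions preserve cardinality, $r$-uniformity and the $t$-intersecting property; the split $|\mathcal{A}|=|\mathcal{A}_0|+|\mathcal{A}_1|$, the shiftedness argument that $\mathcal{A}_1$ is $t$-intersecting, and Pascal's identity correctly close the induction for every $n$ strictly above the threshold. One arithmetic repair is needed: to produce the fresh vertex $i$ you need $|A'\cup B'|\le n-2$, and since $A'$ and $B'$ may intersect in fewer than $t-1$ elements, the worst case is $|A'\cup B'|=2(r-1)$, so the inequality actually required is $n\ge 2r$, not $n\ge 2r-t+1$ (which only covers $|A'\cap B'|=t-1$). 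Fortunately $(t+1)(r-t+1)-2r=(t-1)(r-t-1)\ge 0$ for $r>t$, so the hypothesis does supply it.

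The substantive issue is the boundary case $n=(t+1)(r-t+1)$ that you defer to Frankl, Wilson, or Ahlswede--Khachatrian. You are right that the deferral is unavoidable, but be aware that this is not a corner case --- it \emph{is} the theorem. The statement with the sharp threshold $(t+1)(r-t+1)$ is due to Frankl (for $t\ge 15$) and Wilson (for all $t$), not to Erd\H{o}s, Ko, and Rado, whose original paper obtains this threshold only for $t=1$ and a far weaker bound for general $t$; your deletion induction recovers exactly the easy range above the threshold from the hard boundary instance, so the citation carries essentially all of the weight. (For $t=1$ the boundary $n=2r$ is elementary --- an intersecting family contains at most one $r$-set from each complementary pair, giving $\tfrac12\binom{2r}{r}=\binom{2r-1}{r-1}$ --- so in that case your outline genuinely is a complete, self-contained proof.) In the end your proposal has the same logical status as the paper's bare citation, with the dependence on deep results localized to the extremal value of $n$; that localization is a real, if modest, contribution, but it should not be mistaken for an elementary proof of the stated theorem.
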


Given a simplicial complex $\Delta$ (defined in Section \ref{sec2}) and a face $\sigma$ of
$\Delta$, we define the {\it link} of $\sigma$ in $\Delta$ to be

$${\rm lk}_{\Delta} \sigma = \{\tau : \tau \cup \sigma \in \Delta, \tau \cap \sigma =\emptyset\}.$$

Also for every integer $s\geq0$ we define

$$\Delta_{(s)}=\{\sigma \in \Delta : |\sigma|=s\}.$$

An $r$-face of $\Delta$ is a face of cardinality $r$. We further let $f_r(\Delta)$ be defined
as the number of $r$-faces in $\Delta$, and the tuple $(f_0(\Delta), f_1(\Delta),\ldots, f_{d+1}(\Delta))$
(where $d$ is the dimension of $\Delta$) is called the $f$-vector of $\Delta$.

{\bf Note}. We follow Swartz \cite{s} in our definition of $r$-face and $f_r$. Other
sources define an $r$-face to be a face with dimension $r$ (rather than
cardinality $r$) which shifts the indices of the $f$-vector by $1$.

We restate Theorem \ref{ekr1} using this language:

\begin{thm} \label{ekr}
Assume that $t\leq r$ are two natural numbers. Let $n\geq(t+1)(r-t+1)$ and $\mathcal{A}$ be a $t$-intersecting $r$-family of faces
of the simplex with $n$ vertices. Then $|\mathcal{A}|\leq f_{r-t}({\rm lk}_{\Delta} \sigma)$, where $\sigma$ is is a $t$-face of $\Delta$.
\end{thm}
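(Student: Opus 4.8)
The plan is to deduce the statement directly from Theorem~\ref{ekr1}, since the two theorems differ only in the way the upper bound is expressed. The family $\mathcal{A}$ is a $t$-intersecting $r$-family of subsets of $[n]$, so Theorem~\ref{ekr1} immediately yields $|\mathcal{A}| \leq {n-t\choose r-t}$. It therefore suffices to prove that, when $\Delta$ is the simplex with $n$ vertices and $\sigma$ is any $t$-face of $\Delta$, the quantity $f_{r-t}({\rm lk}_{\Delta}\sigma)$ equals the binomial coefficient ${n-t\choose r-t}$.

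To verify this, I would first identify the link explicitly. Write the vertex set of the simplex as $[n]$, so that every subset of $[n]$ is a face of $\Delta$, and fix a $t$-face $\sigma \subseteq [n]$ with $|\sigma| = t$. By definition, ${\rm lk}_{\Delta}\sigma = \{\tau : \tau \cup \sigma \in \Delta,\ \tau \cap \sigma = \emptyset\}$. Because $\Delta$ is the full simplex, the condition $\tau \cup \sigma \in \Delta$ holds for every $\tau$, so the only remaining constraint is $\tau \cap \sigma = \emptyset$, that is, $\tau \subseteq [n] \setminus \sigma$. Hence ${\rm lk}_{\Delta}\sigma$ is precisely the full simplex on the $n - t$ vertices of $[n] \setminus \sigma$.

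It then remains to count the $(r-t)$-faces of this link. Since it is a full simplex on an $(n-t)$-element vertex set, its faces of cardinality $r-t$ are exactly the $(r-t)$-subsets of $[n] \setminus \sigma$, of which there are ${n-t\choose r-t}$. Thus $f_{r-t}({\rm lk}_{\Delta}\sigma) = {n-t\choose r-t}$, and combining this identity with the bound from Theorem~\ref{ekr1} completes the argument.

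I do not anticipate any genuine obstacle here: the mathematical content is entirely contained in Theorem~\ref{ekr1}, and the work consists only in recognizing that the link of a $t$-face in a simplex is again a simplex and in computing its $f$-vector. The one point worth stating carefully is the indexing convention---counting by cardinality rather than by dimension---so that the link being a simplex on $n-t$ vertices yields exactly ${n-t\choose r-t}$ faces of cardinality $r-t$.
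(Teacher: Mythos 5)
Your proposal is correct and matches the paper's treatment: the paper presents Theorem~\ref{ekr} merely as a restatement of Theorem~\ref{ekr1}, the implicit justification being exactly your observation that the link of a $t$-face in the full simplex on $n$ vertices is the full simplex on the remaining $n-t$ vertices, whose number of faces of cardinality $r-t$ is ${n-t\choose r-t}$. You have simply made explicit the identification the paper takes for granted, including the cardinality (rather than dimension) indexing convention.
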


\begin{dfn}
A simplicial complex $\Delta$ is called {\it ($t,r$)-EKR} if every $t$-intersecting $r$-family $\mathcal{A}$ of faces of $\Delta$ satisfies $|\mathcal{A}|\leq \max f_{r-t}({\rm lk}_{\Delta} \sigma)$, where the maximum is taken over all $t$-faces $\sigma$ of $\Delta$. Equivalently, $\Delta$ is ($t,r$)-EKR if the set of all $r$-faces containing some $t$-face $\sigma$ has
maximal cardinality among all $t$-intersecting families of $r$-faces.
\end{dfn}

As a generalization of the Erd\H{o}s-Ko-Rado theorem, Borg conjectured that:

\begin{conj} \label{con}
{\rm (}\cite[Conjecture 2.6]{b}{\rm )} Let $t\leq r$ be two natural numbers. Assume that $\Delta$ is a simplicial complex having
minimal facet cardinality $k \geq(t+1)(r-t+1)$ and suppose that  $S\neq \emptyset$ is a subset of $[t, r]$. Then every $t$-intersecting family $\mathcal{A}$
 of faces of $\Delta$ with $\mathcal{A}\subseteq \bigcup_{s\in S}\Delta_{(s)}$ satisfies the following inequality:
$$|\mathcal{A}| \leq \max \sum_{s\in S}f_{s-t}({\rm lk}_{\Delta}\sigma), \ \ \ \ \ \ \ \ \ \ \ \ (\ast)$$
where the maximum is taken all over $t$-faces $\sigma$ of $\Delta$.
\end{conj}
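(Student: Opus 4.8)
The plan is to deduce the conjecture for an arbitrary $\Delta$ from the shifted case, which is known (and reproved in this paper). Given a $t$-intersecting family $\mathcal{A} \subseteq \bigcup_{s \in S} \Delta_{(s)}$, I would apply a sequence of combinatorial shift operators $C_{ij}$ simultaneously to the pair $(\Delta, \mathcal{A})$, producing a pair $(\Delta', \mathcal{A}')$ in which $\Delta'$ is shifted and $\mathcal{A}' \subseteq \bigcup_{s \in S} \Delta'_{(s)}$. Combinatorial (rather than algebraic) shifting is the natural tool here: algebraic shifting preserves the $f$-vector and shiftedness but acts on the Stanley--Reisner ideal, not on individual faces, so it offers no mechanism for transporting an intersecting family; a face-level operator does. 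The standard properties of the $C_{ij}$ — that each preserves the cardinality of a family, sends a $t$-intersecting family to a $t$-intersecting family, and can be applied compatibly so that $\mathcal{A}'$ stays inside $\Delta'$ — would give $|\mathcal{A}| = |\mathcal{A}'|$ with $\mathcal{A}'$ a $t$-intersecting family in the shifted complex $\Delta'$. Verifying this compatible-shifting step carefully is the first technical task.

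Next I would check that every hypothesis of the shifted-case result survives the reduction. Combinatorial shifting preserves the $f$-vector in each degree, so $f_{s-t}(\Delta) = f_{s-t}(\Delta')$ for all $s \in S$, and it does not lower the minimal facet cardinality; hence $\Delta'$ again satisfies $k \ge (t+1)(r-t+1)$. Applying the shifted-complex form of the inequality $(\ast)$ to $\mathcal{A}'$ then yields $|\mathcal{A}'| \le \max_{\sigma}\sum_{s \in S} f_{s-t}({\rm lk}_{\Delta'}\sigma)$, the maximum running over $t$-faces $\sigma$ of $\Delta'$ and being attained, for a shifted complex, at $\sigma = [t]$. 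Combined with $|\mathcal{A}| = |\mathcal{A}'|$, this reduces the whole conjecture to a single comparison between the two right-hand sides.

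The genuine obstacle is exactly this comparison. The quantity $\max_{\sigma}\sum_{s\in S} f_{s-t}({\rm lk}_{\Delta}\sigma)$ is assembled from link data, and links do not commute with shifting: in general ${\rm lk}_{\Delta'}\sigma \ne C_{ij}({\rm lk}_{\Delta}\sigma)$, so no formal identity forces the two maxima to agree. To close the argument one would need the monotonicity statement $\max_{\sigma}\sum_{s} f_{s-t}({\rm lk}_{\Delta'}\sigma) \le \max_{\sigma}\sum_{s} f_{s-t}({\rm lk}_{\Delta}\sigma)$, which with the two displays above gives $|\mathcal{A}| \le \max_{\sigma}\sum_{s} f_{s-t}({\rm lk}_{\Delta}\sigma)$. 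But this inequality runs against the grain of shifting, which concentrates faces onto low-index vertices and thereby enriches the single link ${\rm lk}_{\Delta'}[t]$ in $(s-t)$-faces, often making $\max_{\sigma}\sum_{s} f_{s-t}({\rm lk}_{\Delta'}\sigma)$ strictly larger than its counterpart for $\Delta$; the comparison we want points the wrong way. I would therefore direct the main effort at bounding $f_{s-t}({\rm lk}_{\Delta'}[t])$ directly in terms of the links of $\Delta$, tracking how one operator $C_{ij}$ redistributes the faces of a link and isolating structural conditions on $\Delta$ under which this redistribution cannot raise the weighted link-count. Pinning those conditions down is the heart of the matter, and the fact that no such control is available for a completely arbitrary $\Delta$ is precisely what keeps the statement at the level of a conjecture.
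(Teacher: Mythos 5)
You were asked to prove a statement that is, in fact, a conjecture (Borg's Conjecture 2.6), and the paper does not prove it either: it establishes only the shifted case (Theorem \ref{proof}, via Lemma \ref{shifted}) and the case of $i$-near-cones with $t=i$ under a depth hypothesis (Theorem \ref{main}, Corollaries \ref{conje} and \ref{scm}). So the fact that your attempt ends without a complete argument is not a defect, and your diagnosis of exactly where the reduction-to-the-shifted-case strategy breaks is correct and matches the paper's own analysis: Proposition \ref{easy} proves precisely the wrong-direction inequality you identify, namely that $f_{r-t}({\rm lk}_{{\rm Shift}\Delta}\{v_1,\ldots,v_t\})\geq \max_{\sigma} f_{r-t}({\rm lk}_{\Delta}\sigma)$ over $t$-faces $\sigma$, so the bound one obtains after shifting is in general weaker than $(\ast)$ for the original complex. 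Moreover, the program you sketch at the end --- isolate structural conditions on $\Delta$ under which shifting cannot enlarge the relevant link counts --- is exactly what Section \ref{sec4} of the paper carries out: for $i$-near-cones, Lemma \ref{link} and Proposition \ref{fvect} show that exterior algebraic shifting commutes with taking the link of the apex face $F=\{v_1,\ldots,v_i\}$ in the range of dimensions needed, turning the problematic inequality into the equality $f_{r-i}({\rm lk}_{{\rm Shift}\Delta}\{u_1,\ldots,u_i\})=f_{r-i}({\rm lk}_{\Delta}F)$, which is what makes Theorem \ref{main} go through.

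Two factual points in your setup need correction, however. First, you dismiss algebraic shifting as offering ``no mechanism for transporting an intersecting family.'' In fact the paper transports families algebraically by setting ${\rm Shift}\mathcal{A}=({\rm Shift}\,\Delta(\mathcal{A}))_{(r)}$, and Kalai's result (property $(S_5)$) guarantees that exterior algebraic shifting sends $t$-intersecting $r$-families to $t$-intersecting $r$-families of the same cardinality; this is the engine of both Lemma \ref{shifted} and Theorem \ref{main}, so algebraic shifting is not merely usable here but is the paper's chosen tool. Second, your claim that shifting ``does not lower the minimal facet cardinality'' is false: shifting typically creates low-dimensional facets (the complex with facets $\{1,2\}$ and $\{3,4\}$ shifts to one with facets $\{1,2\}$, $\{1,3\}$, $\{4\}$, dropping the minimal facet cardinality from $2$ to $1$), and this failure is precisely why depth enters the paper's hypotheses: by Duval's theorem the minimum facet dimension of ${\rm Shift}_{\mathbb{F}}\Delta$ equals ${\rm depth}_{\mathbb{F}}\Delta$, which is why Theorem \ref{main} assumes ${\rm depth}_{\mathbb{F}}\Delta\geq (i+1)(r-i+1)-1$ rather than a condition on the facet cardinalities of $\Delta$ itself, and why sequential Cohen-Macaulayness (which forces depth to equal the minimum facet dimension) is the hypothesis under which Corollary \ref{scm} recovers the conjecture's original facet-cardinality condition. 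Had you repaired these two points, your sketch would essentially reproduce the paper's route to its partial results, while correctly leaving the general conjecture open.
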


Borg proved Conjecture \ref{con} for shifted complexes \cite[Theorem 2.7]{b}. Using algebraic shifting, Woodroofe gave a new proof for \cite[Theorem 2.7]{b} in a special case of $t=1$ and $S=\{r\}$ \cite[Lemma 3.1]{w}. In this paper we extend Woodroofe's proof and give a complete new proof for \cite[Theorem 2.7]{b} using algebraic shifting (Theorem \ref{proof}). Woodroofe also proved, that in the special case of $t=1$ and $S=\{r\}$, Conjecture \ref{con} is true for sequentially Cohen-Macaulay near-cones \cite[Corollary 3.4]{w}. We also generalize this result and prove that Conjecture \ref{con} is true for every sequentially Cohen-Macaulay $i$-near-cone in the case of $t=i$ (Corollary \ref{scm}).

\begin{rem}
It was also proved by Borg \cite[Theorem 2.1]{b} that Conjecture \ref{con} is true when the minimum facet cardinality of $\Delta$ is at least $(r-t){3r-2t-1\choose t+1}+r$.
\end{rem}

This paper is organized as follows. In Section \ref{sec2} we review the necessary
background on shifted complexes, algebraic shifting, the Cohen-Macaulay property, and $i$-near-cones. In Section \ref{sec3} we present our new proof for \cite[Theorem 2.7]{b}. In Section \ref{sec4} we prove the main results of this paper about intersecting faces of $i$-near-cones, (Corollaries \ref{conje} and \ref{scm}).

\section{Algebraic shifting and near-cones} \label{sec2}

An {\it (abstract)} simplicial complex $\Delta$ on the set of vertices $V(\Delta)$ is a collection of subsets of $[n]$ which is closed under
taking subsets; that is, if $F \in \Delta$ and $F'\subseteq F$, then also
$F'\in\Delta$. Every element $F\in\Delta$ is called a {\it face} of
$\Delta$. We assume that every vertex is contained in some face. The {\it size} of a face $F$ is defined to be $|F|$ and its {\it
dimension} is defined to be $|F|-1$. (As usual, for a given finite set $X$,
the number of elements of $X$ is denoted by $|X|$.) The {\it dimension} of
$\Delta$ which is denoted by $\dim\Delta$, is defined to be $d-1$, where $d
=\max\{|F|\mid F\in\Delta\}$. . A
{\it facet} of $\Delta$ is a maximal face of $\Delta$ with respect to
inclusion. We say that $\Delta$ is {\it pure} if all facets of
$\Delta$ have the same cardinality

If $\mathcal{F}$ is some family of sets, then the simplicial complex $\Delta(\mathcal{F})$ {\it generated by} $\mathcal{F}$ has faces consisting of all subsets of all sets in $\mathcal{F}$. For a
simplicial complex $\Delta$, the $r$-{\it skeleton} $\Delta^{(r)}$ consists of all faces of $\Delta$ having
dimension at most $r$, while the {\it pure $r$-skeleton} is the subcomplex
generated by all faces of $\Delta$ having dimension exactly $r$. The {\it join} of
disjoint simplicial complexes $\Delta$ and $\Sigma$ is the simplicial complex $\Delta\ast\Sigma$
with faces $\tau\cup\sigma$, where $\tau$ is a face of $\Delta$ and $\sigma$ is a face of $\Sigma$.
For every vertex $\sigma \in \Delta$, {\it link} and {\it anti-star} of $\sigma$ are defined by
$${\rm lk}_{\Delta} \sigma =\{\tau \in \Delta : \tau \cap \sigma= \emptyset , \tau \cup \sigma \in \Delta\}$$
and
$${\rm ast}_{\Delta} \sigma =\{\tau \in \Delta : \tau \cap \sigma= \emptyset\}.$$

Also for every integer $s\geq0$ we define

$$\Delta_{(s)}=\{\sigma \in \Delta : |\sigma|=s\}.$$

A simplicial complex $\Delta$ with ordered vertex set $\{v_1, \ldots, v_n\}$ is {\it shifted}
if whenever $\sigma$ is a face of $\Delta$ containing vertex $v_i$, then $(\sigma \setminus \{v_i\})\cup \{v_j\}$
is a face of $\Delta$ for every $j < i$. An $r$-family $\mathcal{F}$ of subsets of $\{v_1, \ldots, v_n\}$
is {\it shifted} if it generates a shifted complex.
\subsection{Algebraic shifting}
A {\it shifting operation} on an ordered vertex set $V$ is a map which associates each simplicial
complex $\Delta$ on $V$ with a simplicial complex ${\rm Shift}\Delta$ on $V$ and which satisfies
the following conditions:

\begin{itemize}
\item[($S_1$)] ${\rm Shift}\Delta$ is a shifted.

\item[($S_2$)] If $\Delta$ is shifted, then ${\rm Shift}\Delta = \Delta$.

\item[($S_3$)] $f_i({\rm Shift}\Delta) = f_i(\Delta)$ for all $i$.

\item[($S_4$)] If $\Gamma\subseteq\Delta$ are simplicial complexes,
then ${\rm Shift}\Gamma\subseteq {\rm Shift}\Delta$.
\end{itemize}

If $\mathcal{A}$ is some $r$-family of sets, then ${\rm Shift}\mathcal{A}$ is defined to be
$${\rm Shift}\mathcal{A}=
{\rm Shift}(\Delta (\mathcal{A}))_{(r)}.$$

In our proofs we need a shifting operation which satisfies the following extra property:

\begin{itemize}
\item[($S_5$)] If $\mathcal{A}$ is a $t$-intersecting $r$-family, then
${\rm Shift}\mathcal{A}$ is a $t$-intersecting $r$-family.
\end{itemize}

Kalai proves (See \cite[Corollary 6.3 and subsequent Remarks]{k}) that a specific shifting operation which is called {\it exterior algebraic shifting} (with respect to a field $\mathbb{F}$) satisfies ($S_5$). We denote the exterior algebraic shift of $\Delta$, with respect to a field $\mathbb{F}$, by ${\rm Shift}_{\mathbb{F}} \Delta$. (The precise definition of exterior algebraic shifting will not be important for us, but can be found in
Kalai's survey article \cite{k}.)

\subsection{Near-cones}
A simplicial complex $\Delta$ is a {\it near-cone} with respect
to an {\it apex vertex} $v$ if for every face $\sigma$, the set $(\sigma \setminus \{w\})\cup \{v\}$ is also
a face for each vertex $w\in \sigma$. Equivalently, the boundary of every
facet of $\Delta$ is contained in $v\ast{\rm lk}_{\Delta} v$; another equivalent condition is
that $\Delta$ is the union of $v\ast{\rm lk}_{\Delta} v$ and some set of facets not containing $v$
(but whose boundary is contained in ${\rm lk}_{\Delta} v$). If $\Delta$ is a cone with apex
vertex $v$, then obviously $\Delta = v\ast{\rm lk}_{\Delta} v$, thus every cone is a near-cone.

\begin{dfn} \label{ncone}
A simplicial complex $\Delta$ is an $i${\it-near-cone} if there exist a sequence of nonempty simplicial
complexes $\Delta = \Delta(0) \supset \Delta(1) \supset \ldots \supset \Delta(i)$ such that for every $1\leq j\leq i$
there is a vertex $v_j \in \Delta(j-1)$ such that $\Delta(j) = {\rm ast}_{\Delta(j-1)}v_j$ and
$\Delta(j-1)$ is a near-cone with respect to $v_j$. The sequence $v_1, \ldots, v_i$ is called the {\it apex} of $\Delta$.
\end{dfn}

$i$-near-cones were first defined by Neve \cite{n}. The following is a simple consequence of its definition.

\begin{lem} \label{face}
Let $\Delta$ be an $i$-near-cone with apex $v_1, \ldots, v_i$, such that ${\rm dim} \Delta\geq 2i-2$. Then $F=\{v_1, v_2, \ldots, v_i\}$ is a face of $\Delta$.
\end{lem}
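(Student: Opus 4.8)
The plan is to argue by induction on $i$, but a direct induction on the bare statement does not close: knowing merely that $\{v_2,\dots,v_i\}$ is a face of ${\rm ast}_{\Delta}v_1$ is not enough to adjoin the apex $v_1$, because the near-cone property only lets one \emph{replace} a vertex of an existing face by $v_1$ rather than freely append it. I would therefore prove the following strengthened statement, from which the lemma is immediate since any subset of a face is a face: if $\Delta$ is an $i$-near-cone with apex $v_1,\dots,v_i$ and $\dim\Delta\geq 2i-2$, then $\Delta$ has a face $F$ with $\{v_1,\dots,v_i\}\subseteq F$ and $|F|\geq\dim\Delta+2-i$.

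First I would record the base case $i=1$, where $\Delta$ is a near-cone with respect to $v_1$: taking a facet $F_0$ of maximal size $\dim\Delta+1$, if $v_1\notin F_0$ I replace any $w\in F_0$ by $v_1$ using the near-cone property to obtain a face $F$ of the same size containing $v_1$, so $|F|\geq\dim\Delta+1=\dim\Delta+2-i$. Next I would isolate the elementary observation that $\dim({\rm ast}_{\Delta}v)\geq\dim\Delta-1$ for any vertex $v$: a maximal facet $F_0$ either avoids $v$, in which case $F_0\in{\rm ast}_{\Delta}v$, or contains it, in which case $F_0\setminus\{v\}\in{\rm ast}_{\Delta}v$ has size $\dim\Delta$.

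For the inductive step I would apply the strengthened claim to $\Delta(1)={\rm ast}_{\Delta}v_1$, which by Definition \ref{ncone} is an $(i-1)$-near-cone with apex $v_2,\dots,v_i$. The dimension hypothesis transfers, since $\dim\Delta(1)\geq\dim\Delta-1\geq 2i-3=2(i-1)-1$. The induction then yields a face $G$ of $\Delta(1)$ with $\{v_2,\dots,v_i\}\subseteq G$ and $|G|\geq\dim\Delta(1)+2-(i-1)\geq\dim\Delta+2-i$. Using $\dim\Delta\geq 2i-2$ I get $|G|\geq i$, so $G$ strictly contains $\{v_2,\dots,v_i\}$; choosing $w\in G\setminus\{v_2,\dots,v_i\}$ (note $w\neq v_1$, as $v_1\notin G$) and replacing $w$ by $v_1$ via the near-cone property of $\Delta$ with respect to $v_1$ produces a face $F=(G\setminus\{w\})\cup\{v_1\}$ with $\{v_1,\dots,v_i\}\subseteq F$ and $|F|=|G|\geq\dim\Delta+2-i$, which closes the induction.

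The main obstacle is precisely the bookkeeping that motivates the strengthening: I must guarantee that the inductively produced face $G$ is strictly larger than $\{v_2,\dots,v_i\}$, so that some vertex $w\neq v_1$ is available to swap out for the apex. This is what forces me to carry the explicit size bound $|F|\geq\dim\Delta+2-i$ through the induction, rather than the weaker conclusion that $\{v_1,\dots,v_i\}$ is merely a face, and it is exactly where the hypothesis $\dim\Delta\geq 2i-2$ is used in full strength.
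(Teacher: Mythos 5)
Your proof is correct and, once the induction is unrolled, it is essentially the paper's own argument: both proofs descend to $\Delta(i-1)$, produce there a face of cardinality at least $i$ avoiding $v_1,\ldots,v_{i-1}$, and then swap in the apex vertices $v_i,v_{i-1},\ldots,v_1$ one at a time using the near-cone replacement property at each level $\Delta(j)$, with the same bookkeeping that a non-apex vertex is always available to swap out. The only organizational differences are that you formalize the paper's ``repeating the argument above'' step as an induction carrying the explicit size bound $|F|\geq {\rm dim}\,\Delta+2-i$, and that you obtain the starting face from the inequality ${\rm dim}({\rm ast}_{\Delta}v)\geq {\rm dim}\,\Delta-1$ rather than by deleting $v_1,\ldots,v_{i-1}$ from a face of cardinality $2i-1$.
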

\begin{proof}
If $i=1$, then there is nothing to prove. So assume that $i\geq 2$. Since ${\rm dim} \Delta\geq 2i-2$, $\Delta$ contains an ($2i-1$)-face, say $\sigma_0$. Considering $\sigma \setminus \{v_1, v_2, \ldots, v_{i-1}\}$, implies that $\Delta$ contains an $i$-face $\sigma_1$, which does not contain the vertices $v_1, v_2, \ldots, v_{i-1}$. Therefore by notations of Definition \ref{ncone}, $\sigma_1\in \Delta(i-1)$. Now $\Delta(i-1)$ is a near-cone with respect to $v_i$, which implies that $\Delta(i-1)$ contains an $i$-face $\sigma_2$ such that $v_i \in \sigma_2$. Since $\sigma_2\in \Delta(i-1)\subset \Delta(i-2)$ and $\Delta(i-2)$ is a near-cone with respect to $v_{i-1}$, the simplicial complex $\Delta(i-2)$ contains an $i$-face $\sigma_3$ such that $\{v_i, v_{i-1}\} \subseteq \sigma_3$. If $i=2$, then we are done. Otherwise repeating the argument above shows that $F=\{v_1, v_2, \ldots, v_i\}$ is a face of $\Delta$.
\end{proof}

\begin{exmp}
Lemma \ref{face} is not true if ${\rm dim} \Delta < 2i-2$. For example assume that
$$\mathcal{F}=\{\{v_1, v_2, v_4, v_6\}, \{v_1,v_3\}, \{v_1,v_5\}, \{v_2,v_3\}, \{v_2,v_5\}, \{v_3, v_4\}, \{v_3,v_5\}, \{v_3,v_6\}\}.$$
Then $\Delta(\mathcal{F})$ is a $3$-near-cone, where $\Delta(0) = \Delta(\mathcal{F})$, $\Delta(1)$ is the simplicial complex generated by $\{\{v_2, v_4, v_6\}, \{v_2,v_3\}, \{v_2,v_5\}, \{v_3, v_4\}, \{v_3,v_5\}, \{v_3,v_6\}\}$ and $\Delta(2)$ is the simplicial complex generated by $\{\{v_4, v_6\}, \{v_3, v_4\}, \{v_3,v_5\}, \{v_3,v_6\}\}$ and $\Delta(3)$ is the simplicial complex generated by $\{\{v_4, v_6\}, \{v_5\}\}$. Now ${\rm dim} \Delta=3 < 4 = 2i-2$ and $F=\{v_1, v_2, v_3\}$ is not a face of $\Delta$.
\end{exmp}

\subsection{Sequentially Cohen-Macaulay complexes and depth}
Let $\mathbb{F}$ be a field. A simplicial complex
$\Delta$ is {\it Cohen-Macaulay} over $\mathbb{F}$ if  $\widetilde{H}_i({\rm lk}_{\Delta} \sigma; \mathbb{F}) =
0$ for all $i < {\rm dim}({\rm lk}_{\Delta} \sigma)$ and all faces $\sigma$ of $\Delta$ (including $\sigma = \emptyset$), where $\widetilde{H}_i(\Delta; \mathbb{F})$ denotes the simplicial homology of $\Delta$ with coefficients in $\mathbb{F}$. It is well-known that every Cohen-Macaulay simplicial complex is pure and that every skeleton of a Cohen-Macaulay simplicial complex is Cohen-Macaulay. A simplicial complex is {\it sequentially Cohen-Macaulay} over $\mathbb{F}$ if every
pure $r$-skeleton of $\Delta$ is Cohen-Macaulay over $\mathbb{F}$. Thus a simplicial complex is Cohen-Macaulay
if and only if it is pure and sequentially Cohen-Macaulay.

Woodroofe \cite{w} defined the {\it depth} of $\Delta$ over $\mathbb{F}$ as
$${\rm depth}_{\mathbb{F}} \Delta = \max\{d : \Delta^{(d)} \ \ {\rm is \ \ Cohen-Macaulay\ \ over} \ \ \mathbb{F}\}.$$
Thus by \cite[Corollary 4.5]{d}, ${\rm depth}_{\mathbb{F}}\Delta$ is the minimum facet dimension of ${\rm Shift}_{\mathbb{F}} \Delta$. We
note that ${\rm depth}_{\mathbb{F}}\Delta$ is one less than the ring-theoretic depth of the
"Stanley-Reisner ring" $\mathbb{F}[\Delta]$ \cite[Theorem 3.7]{sm}. If $\Delta$ is sequentially Cohen-Macaulay over $\mathbb{F}$ then
${\rm depth}_{\mathbb{F}}\Delta$ is the minimum facet dimension of $\Delta$.

By the definition of simplicial homology we have
$\widetilde{H}_d(\Delta^{(d+1)} ; \mathbb{F})=\widetilde{H}_d(\Delta ; \mathbb{F})$. As a simple consequence one obtains the following equivalent characterization:
$${\rm depth}_{\mathbb{F}}\Delta = \max\{d : \widetilde{H}_i({\rm lk}_{\Delta} \sigma; \mathbb{F}) = 0 \ \ {\rm for\ \ all} \ \ \sigma \in \Delta \ \ {\rm and} \ \ i < d-|\sigma|\}.$$
In particular, we notice that ${\rm depth}_{\mathbb{F}}\Delta$ is at most the minimal facet
dimension, since if $\sigma$ is a facet then $\widetilde{H}_{-1}({\rm lk}_{\Delta} \sigma; \mathbb{F})=\widetilde{H}_{-1}(\emptyset; \mathbb{F})= \mathbb{F}$.


\section{A new proof of Borg's result} \label{sec3}

In this section, using  shifting theory, we prove that Conjecture \ref{con} holds for shifted complexes. This will be a new proof for a result of Borg \cite[Theorem 2.7]{b}. Our proof is based on the proof of \cite[Lemma 3.1]{w} due to Woodroofe.

The following Lemma is the main step of our proof. In its proof we do not rely on a specific shifting operator, but only require $(S_1, S_2, S_3, S_4, S_5)$ for the operator Shift.
\begin{lem} \label{shifted}
If $\Delta$ is a shifted complex having
minimal facet cardinality $k$, then $\Delta$ is ($t,r$)-EKR for every natural numbers $t\leq r$ with $k\geq(t+1)(r-t+1)$.
\end{lem}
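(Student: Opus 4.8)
Fix a shifting operation $\mathrm{Shift}$ satisfying $(S_1)$--$(S_5)$ (for instance exterior algebraic shifting, which satisfies $(S_5)$ by Kalai's result). Let $\mathcal{A}$ be a $t$-intersecting $r$-family of faces of $\Delta$ and set $\mathcal{B} = \mathrm{Shift}\,\mathcal{A} = \mathrm{Shift}(\Delta(\mathcal{A}))_{(r)}$. The first step is a routine reduction that I would carry out as follows. Since $\Delta(\mathcal{A}) \subseteq \Delta$, property $(S_4)$ gives $\mathrm{Shift}(\Delta(\mathcal{A})) \subseteq \mathrm{Shift}\,\Delta$, and because $\Delta$ is already shifted, $(S_2)$ yields $\mathrm{Shift}\,\Delta = \Delta$; hence $\mathcal{B} \subseteq \Delta_{(r)}$, i.e.\ $\mathcal{B}$ is again a family of $r$-faces of $\Delta$. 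By $(S_3)$ we have $|\mathcal{B}| = f_r(\Delta(\mathcal{A})) = |\mathcal{A}|$, by $(S_1)$ the family $\mathcal{B}$ is shifted, and by $(S_5)$ it is still $t$-intersecting. Thus it suffices to bound $|\mathcal{B}|$ for a \emph{shifted} $t$-intersecting $r$-family $\mathcal{B}$ contained in $\Delta$.

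The second step isolates the face against which we compare. Since every facet of $\Delta$ has cardinality $k \geq (t+1)(r-t+1) > t$, the complex $\Delta$ has a $t$-face, and shiftedness forces $\sigma_0 := \{v_1, \dots, v_t\}$ to be a face. Unwinding the definition of the link, $f_{r-t}(\mathrm{lk}_{\Delta}\sigma_0)$ equals the number $N$ of $r$-faces of $\Delta$ that contain $\sigma_0$. Because the maximum in the definition of $(t,r)$-EKR ranges over \emph{all} $t$-faces, it is enough to prove the single inequality $|\mathcal{B}| \leq N$; I do \emph{not} need to verify separately that $\sigma_0$ is the maximizing face.

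The heart of the argument is to produce an injection from $\mathcal{B}$ into the set of $r$-faces of $\Delta$ containing $\sigma_0$. The natural candidate sends $\sigma \in \mathcal{B}$ with $\sigma_0 \not\subseteq \sigma$ to the $r$-set obtained by replacing its $p := t - |\sigma \cap [t]| \geq 1$ largest elements (all of which exceed $t$, since $|\sigma\setminus[t]| = r-t+p \geq p$) by the $p$ missing elements of $[t]$; each such replacement is a down-shift of a vertex to a smaller unused one, so shiftedness of $\Delta$ keeps every image inside $\Delta$, and the image contains $\sigma_0$ by construction. The step I expect to be the genuine obstacle is \emph{injectivity}: this map can collide, and it does exactly in the regime where the bound is false --- for instance the full triangle $\{v_1v_2, v_1v_3, v_2v_3\}$ is shifted and $1$-intersecting, but its minimal facet cardinality $2$ violates $(1+1)(2-1+1)=4$, and the compression identifies $\{v_2,v_3\}$ with $\{v_1,v_2\}$. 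Ruling out such collisions is precisely where the hypothesis $k \geq (t+1)(r-t+1)$ must be spent. I would control it by appealing to the Frankl structure theory of shifted $t$-intersecting families, relativized to $\Delta$: either show directly that under the facet-size bound no two distinct members of $\mathcal{B}$ have the same image, or argue by induction on the ground set through the link--deletion decomposition $\mathcal{B} = \mathcal{B}_0 \sqcup (v_n * \mathcal{B}_1)$, where $\mathcal{B}_0 = \{\sigma \in \mathcal{B} : v_n \notin \sigma\}$ is a shifted $t$-intersecting $r$-family in $\mathrm{ast}_{\Delta}v_n$ and $\mathcal{B}_1 = \{\sigma \setminus \{v_n\} : v_n \in \sigma \in \mathcal{B}\}$ is a shifted $(t-1)$-intersecting $(r-1)$-family in $\mathrm{lk}_{\Delta}v_n$, combined via the cross-intersecting comparison of the two pieces. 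In either route the delicate point --- and the place where $(t+1)(r-t+1)$ is used --- is guaranteeing enough ``room'' in the star of $\sigma_0$ inside $\Delta$ to absorb the shifted family without overlap; this is the analogue, relative to $\Delta$ rather than to the full simplex, of the counting in Woodroofe's proof of the case $t=1$.
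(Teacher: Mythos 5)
Your opening reduction (pass to $\mathrm{Shift}\,\mathcal{A}$ via $(S_1)$--$(S_5)$, note it lands inside $\Delta = \mathrm{Shift}\,\Delta$, and reduce to bounding a shifted $t$-intersecting family against the single face $\sigma_0 = \{v_1,\dots,v_t\}$) is correct and coincides with the paper's first step. But after that the proof stops being a proof. You propose the naive compression map (replace the $p$ largest elements of $\sigma$ by the missing elements of $[t]$), correctly observe that it need not be injective, and then only declare that you \emph{would} repair this ``by appealing to the Frankl structure theory of shifted $t$-intersecting families'' or ``by induction on the ground set through the link--deletion decomposition \dots combined via the cross-intersecting comparison.'' Neither route is carried out: no cross-intersecting lemma is stated or proved, no structure theorem is invoked in a checkable form, and --- decisively --- the hypothesis $k \geq (t+1)(r-t+1)$ is never actually spent in any completed step. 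Since you yourself identify injectivity as ``the genuine obstacle,'' leaving it unresolved means the heart of the lemma is missing. Note also that the cross-intersecting route is harder than you suggest: the relevant comparison lemmas are known for families in the full simplex, but here $\mathcal{B}_0$ and $\mathcal{B}_1$ live in $\mathrm{ast}_\Delta v_n$ and $\mathrm{lk}_\Delta v_n$, which are shifted complexes whose minimal facet cardinalities must be re-verified against the threshold, so this is not an off-the-shelf citation.

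For contrast, the paper's mechanism is different from both of your suggested routes and is fully elementary. It partitions $\mathrm{Shift}\,\mathcal{A}$ according to the intersection $U = \sigma \cap W$ with the \emph{top} $t$ vertices $W = \{v_{n-t+1},\dots,v_n\}$, rather than compressing toward $\{v_1,\dots,v_t\}$ or splitting off one vertex. Two cases are handled with the same replacement trick you attempted, but pointed in the safe direction: if $|U| > r - t$, shiftedness lets one replace the vertices of $U$ by fresh vertices outside $\sigma \cup W$ (here $k \geq (t+1)(r-t+1)$ guarantees enough room, $n > r+t$), producing a member of the family meeting $\sigma$ in fewer than $t$ elements --- so that part is empty; if $|U| \leq r - t$, a similar replacement shows $\{\sigma \setminus U\}$ is itself a $t$-intersecting $(r-|U|)$-family in $\mathrm{lk}_{\Delta_U} U$, a smaller shifted complex still satisfying the cardinality hypothesis, and induction (with base cases $r = t$ and the simplex, i.e.\ the classical EKR theorem) bounds it by $f_{r-|U|-t}(\mathrm{lk}_{\Delta_U}(U \cup \{v_1,\dots,v_t\}))$. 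Summing over $U \subseteq W$ telescopes exactly to $f_{r-t}(\mathrm{lk}_\Delta\{v_1,\dots,v_t\})$. The lesson: the winning move is not to force an injection into the star of $\{v_1,\dots,v_t\}$ (which, as your own example shows, collides), but to stratify by the top vertices so that shiftedness pushes collisions into contradictions with the $t$-intersecting property.
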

\begin{proof}
Let $\Delta$ have ordered vertex set $\{v_1, \ldots, v_n\}$, and let $\mathcal{A}$ be a $t$-intersecting $r$-family
 of faces of $\Delta$. Using induction we prove that $|\mathcal{A}|\leq f_{r-t}({\rm lk}_{\Delta}\{v_1, \ldots, v_t\})$. Our base
cases are when $\Delta$ is a simplex (Theorem \ref{ekr}), and the trivial case where
$r = t$.

If $\Delta$ is not a simplex and $r > t$, then by $S_1, S_2, S_3$ and $S_5$, we have
that ${\rm Shift}\mathcal{A}$ is a shifted $t$-intersecting $r$-family  of faces of $\Delta= {\rm Shift}\Delta$
with $|{\rm Shift}\mathcal{A}| = |\mathcal{A}|$. For simplification let $W:=\{v_{n-t+1}, \ldots, v_n\}$. For every subset $U$ of $W$ let $\mathcal{C}_U$ be the set of all faces $\sigma\in {\rm Shift}\mathcal{A}$ with $W\cap \sigma = U$, so that $|\mathcal{A}| = |{\rm Shift}\mathcal{A}| = \sum|\mathcal{C}_U|$, where the sum is taken over all subsets $U$ of $W$. We study $\mathcal{C}_U$ in terms of $|U|$.

{\bf Case 1.} $r < |U|+t$. We claim that in this case $\mathcal{C}_U=\emptyset$. Assume that $\mathcal{C}_U \neq\emptyset$ and choose a member $C\in \mathcal{C}_U$. Since
$$n-|U| > k -|U| \geq(t+1)(r-t+1)-|U|>r+t-|U|,$$  by the definition of shiftedness, in $C$ one can replace the members of $U$  by some other vertices, such that the new vertices do not belong to $C\cup W$. We call this new set $C^{\prime}$. By the choice of $W$ we have $C^{\prime}\in {\rm Shift}\mathcal{A}$ and since $r < |U|+t$, it follows that $|C\cap C^{\prime}| <t$, which is a impossible, because ${\rm Shift}\mathcal{A}$ is a $t$-intersecting family. Thus $\mathcal{C}_U=\emptyset$ for every subset $U$ of $F$ with $r < |U|+t$.

{\bf Case 2.} $r \geq |U|+t$. Let $\mathcal{C}^{\prime}_U =\{ \sigma \setminus U : \sigma \in \mathcal{C}_U\}$. Hence $|\mathcal{C}^{\prime}_U| = |\mathcal{C}_U|$.
 Assume that $|\mathcal{C}^{\prime}_U| \geq 1$ and suppose that $\mathcal{C}^{\prime}_U$ is not $t$-intersecting for some $U\subseteq W$. Then there are $\sigma, \tau \in\mathcal{C}_U$ such that $|(\sigma \setminus U) \cap (\tau \setminus U)|\leq t-1$. Now $|\sigma \cup \tau|\leq 2r-t$ and since
$U \subseteq \sigma \cap \tau$, we conclude that $|\sigma \cup \tau \cup W| \leq 2r-|U|$. By assumption $r\geq t+1$ and $k\geq(t+1)(r-t+1)$ which implies that $k\geq 2r$ and thus
 $|\sigma \cup \tau \cup W| \leq k-|U| < n-|U|$.  It follows that there
exist $|U|$ vertices $v_{\ell_1}, \ldots, v_{\ell_{|U|}}$ such that for every $i$ with $1\leq i \leq |U|$, $v_{\ell_i} \notin \sigma \cup \tau \cup W$.
But then $\tau'=(\tau\setminus U)\cup \{v_{\ell_1}, \ldots, v_{\ell_{|U|}}\}$ is in ${\rm Shift}\mathcal{A}$, by the choice of $W$ and the
the definition of shiftedness, and $|\sigma \cap \tau'|\leq t-1$, which contradicts that
${\rm Shift}\mathcal{A}$ is $t$-intersecting. We conclude that $\mathcal{C}^{\prime}_U$ is a $t$-intersecting ($r-|U|$)-
family of faces of ${\rm lk}_{\Delta_U}U$, where $\Delta_U$ is the simplicial complex obtained from $\Delta$ by deleting the vertices of the set $W\setminus U$. Since ${\rm lk}_{\Delta_U}U$ is a shifted complex on ground set $\{v_1, \ldots, v_{n-t}\}$ with
minimum facet cardinality at least $k-|U|$ and since $k-|U|\geq(t+1)(r-|U|-t+1)$, we conclude that $$|\mathcal{C}_U| = |\mathcal{C}^{\prime}_U|\leq  f_{r-|U|-t}({\rm lk}_{\Delta_U}(U \cup \{v_1, \ldots, v_t\})),$$ by our induction hypothesis.

Finally we have
$$|\mathcal{A}| = |{\rm Shift}\mathcal{A}| = \sum_{U\subseteq W, r \geq |U|+t}|\mathcal{C}_U| \leq \sum_{U\subseteq W, r \geq |U|+t}f_{r-|U|-t}({\rm lk}_{\Delta_U}(U \cup \{v_1, \ldots, v_t\}))$$

$$= f_{r-t}({\rm lk}_{\Delta}\{v_1, \ldots, v_t\}).$$

Where the second equality follows from case 1 and the first inequality follows from case 2.
\end{proof}

Borg \cite{b} proved that Conjecture \ref{con} holds for shifted complexes. Here using Lemma \ref{shifted} we give a new proof for it.

\begin{thm} \label{proof}
{\rm (}\cite[Theorem 2.7]{b}{\rm )} Conjecture \ref{con} is true if $\Delta$ is shifted complex.
\end{thm}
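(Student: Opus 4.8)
The plan is to reduce the full statement of Conjecture \ref{con} for a shifted complex $\Delta$ to the single-cardinality case already established in Lemma \ref{shifted}. Recall that Lemma \ref{shifted} shows $\Delta$ is $(t,r)$-EKR, i.e.\ it handles a $t$-intersecting $r$-family for one fixed $r$; here we must handle a family $\mathcal{A} \subseteq \bigcup_{s \in S}\Delta_{(s)}$ spread across the several cardinalities $s \in S \subseteq [t,r]$, and bound $|\mathcal{A}|$ by $\max \sum_{s \in S} f_{s-t}(\operatorname{lk}_\Delta \sigma)$ over $t$-faces $\sigma$. The natural first step is to stratify $\mathcal{A}$ by face size: write $\mathcal{A}_s := \mathcal{A} \cap \Delta_{(s)}$ for each $s \in S$, so that $|\mathcal{A}| = \sum_{s\in S}|\mathcal{A}_s|$, and each $\mathcal{A}_s$ is by itself a $t$-intersecting $s$-family of faces of $\Delta$.

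The key observation I would exploit is that the whole argument of Lemma \ref{shifted} is engineered to produce the \emph{same} maximizing $t$-face, namely the initial segment $\{v_1,\ldots,v_t\}$ in the ordered vertex set. Concretely, since $\Delta$ is shifted we have $\operatorname{Shift}\Delta = \Delta$ by $(S_2)$, and applying exterior algebraic shifting to $\mathcal{A}$ preserves cardinality by $(S_3)$ and the $t$-intersecting property by $(S_5)$; moreover, because shifting is compatible with restriction to a fixed cardinality, $\operatorname{Shift}(\mathcal{A})_{(s)} = \operatorname{Shift}(\mathcal{A}_s)$ cardinality-wise for each $s$. Thus for each individual $s \in S$, Lemma \ref{shifted} (whose hypothesis $k \ge (t+1)(r-t+1) \ge (t+1)(s-t+1)$ holds for all $s \le r$) gives
$$|\mathcal{A}_s| \leq f_{s-t}\bigl(\operatorname{lk}_\Delta\{v_1,\ldots,v_t\}\bigr).$$
Summing over $s \in S$ yields
$$|\mathcal{A}| = \sum_{s\in S}|\mathcal{A}_s| \leq \sum_{s\in S} f_{s-t}\bigl(\operatorname{lk}_\Delta\{v_1,\ldots,v_t\}\bigr) \leq \max_{\sigma \in \Delta_{(t)}} \sum_{s\in S} f_{s-t}(\operatorname{lk}_\Delta \sigma),$$
which is exactly the inequality $(\ast)$.

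The step I expect to be the main obstacle is verifying that the per-cardinality bound really is realized by one common $t$-face $\{v_1,\ldots,v_t\}$, rather than by a different optimal $t$-face for each $s$; this is what makes the summation legitimate rather than merely giving $\sum_s \max_\sigma f_{s-t}(\operatorname{lk}_\Delta\sigma)$, which could be strictly larger than $\max_\sigma \sum_s f_{s-t}(\operatorname{lk}_\Delta\sigma)$. The resolution is that the proof of Lemma \ref{shifted} does not optimize over $\sigma$ a posteriori; it proves the bound with the specific face $\{v_1,\ldots,v_t\}$, and in a shifted complex this initial segment simultaneously maximizes $f_{s-t}(\operatorname{lk}_\Delta\sigma)$ for every $s$, since shiftedness forces the link of $\{v_1,\ldots,v_t\}$ to contain a copy of the link of any other $t$-face. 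A secondary technical point to check is that one may apply exterior algebraic shifting once to the \emph{entire} family $\mathcal{A}$ and have it interact correctly with each cardinality stratum; this follows from $(S_4)$ together with the definition $\operatorname{Shift}\mathcal{A} = \operatorname{Shift}(\Delta(\mathcal{A}))_{(r)}$ applied at each level $s$, so no new shifting machinery beyond $(S_1)$–$(S_5)$ is required.
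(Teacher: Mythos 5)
Your proposal is correct and is essentially the paper's own proof: the paper likewise stratifies $\mathcal{A}$ into $\mathcal{A}_{(s)} = \mathcal{A} \cap \Delta_{(s)}$ for $s \in S$, applies Lemma \ref{shifted} to each stratum, and uses the fact that the proof of that lemma bounds each $|\mathcal{A}_{(s)}|$ by $f_{s-t}(\operatorname{lk}_{\Delta}\{v_1,\ldots,v_t\})$ with the \emph{same} initial-segment $t$-face, so the sum is bounded by $\max_{\sigma}\sum_{s\in S} f_{s-t}(\operatorname{lk}_{\Delta}\sigma)$. The key subtlety you flag (one common maximizer versus a sum of per-$s$ maxima) is exactly the point the paper exploits, and your extra remarks about shifting the whole family are harmless but unnecessary, since $\Delta$ is already shifted and Lemma \ref{shifted} applies directly to each $\mathcal{A}_{(s)}$.
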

\begin{proof}
Note that $\mathcal{A}$ is the disjoint union of the sets $\mathcal{A}_{(s)}:=\mathcal{A}\cap \Delta_{(s)}$ with $s\in S$. Now by Lemma \ref{shifted} for every $s\in S$ we have
$$|\mathcal{A}_{(s)}|\leq \max_{\tau \in \Delta_{(t)}}f_{s-t}({\rm lk}_{\Delta}\tau)= f_{s-t}({\rm lk}_{\Delta}\sigma),$$
where $\sigma=\{v_1, \ldots, v_t\}$. Thus $|\mathcal{A}| \leq \sum_{s\in S}f_{s-t}({\rm lk}_{\Delta}\sigma)$ and this completes the proof.
\end{proof}

\begin{prop} \label{easy}
Assume that $\Delta$ is a simplicial complex and ${\rm Shift}\Delta$ is its exterior algebraic shift. Let us consider ${\rm Shift}\Delta$ as having ordered vertex set
$\{v_1, \ldots, v_n\}$. Then for every two integers $r\geq t$ we have
$$f_{r-t}({\rm lk}_{{\rm Shift} \Delta}\{v_1, \ldots, v_t\})\geq \max\{f_{r-t}({\rm lk}_{\Delta}\sigma) : \sigma \ \ {\rm is \ \ a \ \ } t{\rm-face \ \ of }\ \ \Delta\}.$$
\end{prop}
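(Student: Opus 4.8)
The plan is to reinterpret both sides of the asserted inequality as counts of $r$-faces and then exploit the way exterior algebraic shifting interacts with the cone operation. First I would recall that for a $t$-face $\sigma$, an $(r-t)$-face of ${\rm lk}_\Delta\sigma$ is precisely a set $\tau$ of cardinality $r-t$, disjoint from $\sigma$, with $\sigma\cup\tau\in\Delta$; hence $f_{r-t}({\rm lk}_\Delta\sigma)$ equals the number of $r$-faces of $\Delta$ containing $\sigma$. Fixing a $t$-face $\sigma$ that attains the maximum on the right-hand side, I would set
$$\mathcal{A}_\sigma=\{F\in\Delta_{(r)} : \sigma\subseteq F\},$$
so that $f_{r-t}({\rm lk}_\Delta\sigma)=|\mathcal{A}_\sigma|$. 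The key structural observation is that the complex $\Delta(\mathcal{A}_\sigma)$ generated by $\mathcal{A}_\sigma$ is exactly $\bar\sigma\ast\Gamma$, the join of the full simplex $\bar\sigma$ on the vertex set $\sigma$ with the pure $(r-t-1)$-skeleton $\Gamma$ of ${\rm lk}_\Delta\sigma$. In particular $\Delta(\mathcal{A}_\sigma)$ is an iterated cone, the $t$ vertices of $\sigma$ serving as successive apexes, and $\dim\Delta(\mathcal{A}_\sigma)=r-1$.

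Next I would invoke the fundamental property that exterior algebraic shifting commutes with coning (Kalai, \cite{k}): the shift of a cone is again a cone whose apex is the least vertex $v_1$, and deleting that apex returns the shift of the base. Applying this $t$ times to $\bar\sigma\ast\Gamma$ gives
$${\rm Shift}(\Delta(\mathcal{A}_\sigma))=\{v_1,\ldots,v_t\}\ast{\rm Shift}\Gamma,$$
an iterated cone over the simplex on $\{v_1,\ldots,v_t\}$. I would then check that every $r$-face $F$ of this complex contains $\{v_1,\ldots,v_t\}$: writing $F=(F\cap\{v_1,\ldots,v_t\})\cup(F\setminus\{v_1,\ldots,v_t\})$, the second part lies in ${\rm Shift}\Gamma$, and since property $(S_3)$ forces ${\rm Shift}\Gamma$ to have the same dimension $r-t-1$ as $\Gamma$, it has cardinality at most $r-t$; hence $|F\cap\{v_1,\ldots,v_t\}|\geq t$, forcing $F\supseteq\{v_1,\ldots,v_t\}$.

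Finally I would assemble the pieces. Because $\Delta(\mathcal{A}_\sigma)\subseteq\Delta$, property $(S_4)$ yields ${\rm Shift}(\Delta(\mathcal{A}_\sigma))\subseteq{\rm Shift}\Delta$, so every member of ${\rm Shift}\mathcal{A}_\sigma=({\rm Shift}(\Delta(\mathcal{A}_\sigma)))_{(r)}$ is an $r$-face of ${\rm Shift}\Delta$ containing $\{v_1,\ldots,v_t\}$, and distinct such faces yield distinct $(r-t)$-faces of ${\rm lk}_{{\rm Shift}\Delta}\{v_1,\ldots,v_t\}$ upon deleting the apexes. Using property $(S_3)$ to see $|{\rm Shift}\mathcal{A}_\sigma|=|\mathcal{A}_\sigma|$, I obtain
$$f_{r-t}({\rm lk}_\Delta\sigma)=|\mathcal{A}_\sigma|=|{\rm Shift}\mathcal{A}_\sigma|\leq f_{r-t}({\rm lk}_{{\rm Shift}\Delta}\{v_1,\ldots,v_t\}),$$
which is the desired inequality once the maximum over $\sigma$ is taken.

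The main obstacle is the cone-commutation step: it is the one place where the specific exterior algebraic shift, rather than the abstract axioms $(S_1)$--$(S_5)$, is genuinely needed, so the argument must rely on this structural feature of Kalai's construction. Everything else is routine bookkeeping with the $f$-vector (via $(S_3)$) and the monotonicity axiom $(S_4)$.
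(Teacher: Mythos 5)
Your proof is correct and takes essentially the same route as the paper: the paper likewise generates a subcomplex from the $r$-faces containing a maximizing $t$-face $\tau$, observes it is a cone with apex set of cardinality $t$, invokes Nevo's result (\cite[Corollary 5.4]{n}) that the exterior shift of a cone is again a cone, and finishes with properties $(S_3)$ and $(S_4)$. Your version is in fact a touch more careful, since you pin the apex of the shifted cone down to the initial segment $\{v_1,\ldots,v_t\}$ and justify via the dimension count that every $r$-face contains it, whereas the paper works with an unspecified apex set $\sigma'$ and passes from $f_{r-t}({\rm lk}_{{\rm Shift}\Delta}\sigma')$ to $f_{r-t}({\rm lk}_{{\rm Shift}\Delta}\{v_1,\ldots,v_t\})$ without explicit justification.
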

\begin{proof}
Let $\tau$ be a $t$-face of $\Delta$ such that for every $t$-face $\sigma$ of $\Delta$, $s:=f_{r-t}({\rm lk}_{\Delta}\tau)\geq f_{r-t}({\rm lk}_{\Delta}\sigma)$. Assume that $\sigma_1, \ldots, \sigma_s$ are $r$-faces of $\Delta$ which contain $\tau$. If $\Gamma$ is the simplicial complex generated by $\sigma_1, \ldots, \sigma_s$, then by $S_4$, ${\rm Shift}\Gamma\subseteq {\rm Shift}\Delta$. Note that $\Gamma$ is a cone with apex $\sigma$ and therefore by \cite[Corollary 5.4]{n}, ${\rm Shift}\Gamma$ is also a cone with an apex set $\sigma^{\prime}$ of cardinality $t$. Hence
$$f_{r-t}({\rm lk}_{\Delta}\tau)=s=f_r(\Gamma)=f_r({\rm Shift}\Gamma)=f_{r-t}({\rm lk}_{{\rm Shift} \Gamma}\sigma^{\prime})\leq f_{r-t}({\rm lk}_{{\rm Shift} \Delta}\sigma^{\prime})$$
$$\leq f_{r-t}({\rm lk}_{{\rm Shift} \Delta}\{v_1, \ldots, v_t\}).$$
\end{proof}

Note that Proposition \ref{easy} essentially says that there exists a shifted simplicial complex for which the right hand side of inequality $(\ast)$, in Conjecture \ref{con}, takes its maximum value. This suggests that shifted simplicial complexes are an easy case of Conjecture \ref{con}.


\section{Intersecting faces of $i$-near-cones} \label{sec4}

In this section we settle Conjecture \ref{con} in the case of $i$-near-cones for some special class of parameters. In the proof we use exterior  algebraic shifting and Lemma \ref{shifted}. Therefore in this section we fix a field $\mathbb{F}$ and by ${\rm Shift}\Delta$ we always mean the exterior algebraic shifting with respect to $\mathbb{F}$. First we need the following proposition, which shows that the the low dimensional skeleta of an $i$-near-cone satisfy a shifting property.

\begin{prop} \label{skl}
Let $\Delta$ be an $i$-near-cone on vertex set $\{v_1, \ldots, v_n\}$, having
minimal facet cardinality $k$. Assume that the sequence $v_1, \ldots v_i$ is the apex of $\Delta$. Then for every $s\leq k-i-1$ and every $\sigma \in \Delta^{(s)}$, $(\sigma\setminus \{v_t\})\cup\{v_j\}\in \Delta^{(s)}$, for every $1\leq j \leq i$ and every $t>j$, with $v_t\in \sigma$.
\end{prop}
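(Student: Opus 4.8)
The plan is to exploit the filtration $\Delta = \Delta(0) \supset \Delta(1) \supset \cdots \supset \Delta(i)$ of Definition \ref{ncone}, noting first that unwinding the anti-stars gives $\Delta(\ell) = \{\tau \in \Delta : \tau \cap \{v_1, \ldots, v_\ell\} = \emptyset\}$. Fix $\sigma \in \Delta^{(s)}$ and $v_t \in \sigma$ with $t > j$ and $1 \leq j \leq i$; note $v_t \neq v_j$. If $v_j \in \sigma$ the claim is immediate, since then $(\sigma \setminus \{v_t\}) \cup \{v_j\} = \sigma \setminus \{v_t\} \subseteq \sigma$, so assume $v_j \notin \sigma$. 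The strategy is to (i) strip off the low apex vertices $A := \sigma \cap \{v_1, \ldots, v_{j-1}\}$, leaving $\sigma_0 := \sigma \setminus A \in \Delta(j-1)$ with $v_t \in \sigma_0$; (ii) exchange $v_t$ for $v_j$ inside $\Delta(j-1)$; and (iii) add the vertices of $A$ back one at a time. Step (ii) is immediate: since $\Delta(j-1)$ is a near-cone with respect to $v_j$, applying the near-cone property to $\sigma_0$ and $v_t$ yields $\tau_0 := (\sigma_0 \setminus \{v_t\}) \cup \{v_j\} \in \Delta(j-1)$.

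Two elementary facts about near-cones drive the argument. First, an \emph{add-an-apex} observation: if $\Gamma$ is a near-cone with respect to $v$ of minimal facet cardinality $k'$ and $\tau \in \Gamma$ satisfies $v \notin \tau$ and $|\tau| < k'$, then $\tau \cup \{v\} \in \Gamma$. Indeed, $\tau$ lies in some facet $F$ with $|F| \geq k' > |\tau|$, so $\tau$ is a proper subface of $F$ and hence lies in its boundary; by the defining property of a near-cone this boundary is contained in $v \ast {\rm lk}_{\Gamma} v$, and since $v \notin \tau$ this forces $\tau \in {\rm lk}_{\Gamma} v$, i.e. $\tau \cup \{v\} \in \Gamma$. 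Second, a bound on the minimal facet cardinalities along the filtration: that of $\Delta(\ell)$ is at least $k - \ell$. This follows by induction, since a facet $G$ of $\Delta(\ell+1) = {\rm ast}_{\Delta(\ell)} v_{\ell+1}$ is a maximal $v_{\ell+1}$-avoiding face of $\Delta(\ell)$, hence is either itself a facet of $\Delta(\ell)$ or equals $F \setminus \{v_{\ell+1}\}$ for a facet $F \ni v_{\ell+1}$ of $\Delta(\ell)$, so $|G|$ drops by at most one at each stage.

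With these in hand, step (iii) adds the elements of $A = \{v_{a_1}, \ldots, v_{a_m}\}$ (with $a_1 < \cdots < a_m \leq j-1$) in order of decreasing index. To insert $v_{a_q}$ I work inside $\Delta(a_q - 1)$, which is a near-cone with respect to $v_{a_q}$ and contains the current face, because $\tau_0$ avoids $v_1, \ldots, v_{j-1}$ and every vertex already inserted has index exceeding $a_q$; the add-an-apex observation then applies as soon as the current cardinality is below the minimal facet cardinality of $\Delta(a_q - 1)$. This is exactly where the hypothesis $s \leq k - i - 1$ is used: every intermediate face has cardinality at most $|\sigma| \leq s + 1 \leq k - i$, whereas $\Delta(a_q - 1)$ has minimal facet cardinality at least $k - (a_q - 1) \geq k - i + 2 > k - i$, so the size condition holds at every step. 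The result of reassembly is $\tau_0 \cup A = (\sigma \setminus \{v_t\}) \cup \{v_j\}$, a face of $\Delta$ of the same cardinality as $\sigma$, hence a member of $\Delta^{(s)}$. I expect the main obstacle to be precisely the cardinality bookkeeping in step (iii): one must confirm that restoring the low apex vertices never raises the cardinality to the minimal facet cardinality of the relevant sub-near-cone, which is what pins down the threshold $s \leq k - i - 1$ and makes the minimal-facet-cardinality estimate for $\Delta(\ell)$ indispensable.
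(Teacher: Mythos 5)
Your proof is correct, and it coincides with the paper's for the first two steps --- stripping $A=\sigma\cap\{v_1,\ldots,v_{j-1}\}$ so that $\sigma_0\in\Delta(j-1)$, then exchanging $v_t$ for $v_j$ via the near-cone property of $\Delta(j-1)$ --- but your reassembly step is organized genuinely differently. The paper re-inserts all $m$ stripped vertices in one pass: since $|\sigma_1|\le k-i-m$ and every facet of $\Delta$ has at least $k$ vertices, $\sigma_1$ can be enlarged inside a single facet of $\Delta$ by $m$ fresh vertices $u_1,\ldots,u_m$ avoiding $\{v_1,\ldots,v_i\}$, and then each $u_\ell$ is swapped for the corresponding stripped vertex $v_{i_\ell}$ using the near-cone property of $\Delta(i_\ell-1)$ (the paper's displayed faces $\sigma_2,\ldots,\sigma_{m+1}$ implicitly carry the $u$'s along; read literally the formulas are slightly off, but the intent is clear). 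You instead re-insert the stripped vertices one at a time via your add-an-apex observation, which forces you to prove an auxiliary estimate the paper never needs: that the minimal facet cardinality of $\Delta(\ell)$ is at least $k-\ell$, because your insertions take place inside facets of $\Delta(a_q-1)$ rather than facets of $\Delta$ itself. Both mechanisms run on the same engine --- pad a small face inside a facet with a spare vertex, then exchange the spare for an apex vertex --- and both identify $s\le k-i-1$ as exactly the room needed for the padding. What your route buys is modularity: the add-an-apex lemma and the filtration estimate are clean, reusable statements, and you also dispose of the trivial case $v_j\in\sigma$ explicitly, which the paper passes over silently. What it costs is the extra induction on the facet cardinalities of the complexes $\Delta(\ell)$, which the paper's single bulk padding inside one facet of $\Delta$ avoids entirely.
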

\begin{proof}
We use the notations from Definition \ref{ncone}. Assume that $\sigma$ is a face of $\Delta^{(s)}$ such that $v_j\notin \sigma$ and suppose that $\{v_1, \ldots, v_{j-1}\}\cap \sigma=\{v_{i_1}, \ldots, v_{i_m}\}$, with $i_1< i_2 < \ldots < i_m$. Then $\sigma_0:=\sigma \setminus \{v_1, \ldots, v_{j-1}\}$ belongs to $\Delta(j-1)$ and therefore by the definition of $i$-near-cone $\sigma_1=(\sigma_0\setminus\{v_t\})\cup\{v_j\}\in \Delta(j-1)$. Since $\sigma_1$ is contained in a facet of $\Delta$ and
$$|\sigma_1|=|\sigma_0|=|\sigma|-m\leq s+1-m\leq k-i-m,$$
there exist vertices $u_1, \ldots, u_m$ such that for every $1\leq l \leq m$, $u_l\notin \sigma_1\cup\{v_1, \ldots, v_i\}$ and $\sigma_1\cup \{u_1, \ldots, u_m\}$ is a face of $\Delta$. Now by the definition of $i$-near cone $$\sigma_2= (\sigma_1\setminus \{u_m\})\cup\{v_{i_m}\},$$
 $$\sigma_3=(\sigma_2\setminus \{u_{m-1}\})\cup\{v_{i_{m-1}}\},$$
  $$\vdots$$
 and
   $$\sigma_{m+1}= (\sigma_m\setminus \{u_1\})\cup\{v_{i_1}\}=(\sigma\setminus \{v_t\})\cup\{v_j\}$$ are faces of $\Delta$.
\end{proof}

Notice that Proposition \ref{skl}  says
that if $\Delta$ is an $i$-near cone, then $\Delta^{(s)}$ is "shifted with respect to $v_1, \ldots, v_i$", for every $s\leq k-i-1$. Nevo examined the algebraic shift
of a near-cone, showing:

\begin{lem} \label{nevo}
{\rm (}\cite[Corollary 5.3]{n}{\rm )} Assume that $\Delta$ is a near-cone
with apex $v$, let us consider ${\rm Shift}\Delta$ as having ordered vertex set
$\{u_1, \ldots, u_n\}$ and ${\rm Shift}({\rm lk}_{\Delta} v)$ as having ordered vertex set
$\{u_2, \ldots, u_n\}$. Then
$${\rm lk}_{{\rm Shift}\Delta} u_1 = {\rm Shift}({\rm lk}_{\Delta} v).$$
\end{lem}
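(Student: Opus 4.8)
The plan is to recast the statement as a comparison of ranks of linear multiplications in the exterior face ring, dispatch the easy inequality combinatorially, and isolate the reverse inequality as the place where the near-cone hypothesis does the real work. Let $E=\bigwedge\langle e_1,\dots,e_n\rangle$ and let $\mathbb{F}\{\Delta\}=E/(e_\tau:\tau\notin\Delta)$ be the exterior face ring, graded so that $\dim_{\mathbb{F}}\mathbb{F}\{\Delta\}_j=f_j(\Delta)$. Recall (Kalai, \cite{k}) that ${\rm Shift}\Delta$ is read off from a generic linear change of coordinates $f_1,\dots,f_n$ together with the lexicographic order: writing $f_S=f_{i_1}\wedge\cdots\wedge f_{i_{|S|}}$ for $S=\{i_1<\cdots<i_{|S|}\}$, one has $S\in{\rm Shift}\Delta$ iff the image of $f_S$ is not in the span of the images of those $f_{S'}$ with $|S'|=|S|$ and $S'<_{\rm lex}S$. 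Because $u_1$ is lex-minimal, every $S'<_{\rm lex}\{1\}\cup T$ of the same cardinality must also begin with $1$; hence $T\in{\rm lk}_{{\rm Shift}\Delta}u_1$ precisely when $f_1\wedge f_T$ is new, and therefore
\[ f_j({\rm lk}_{{\rm Shift}\Delta}u_1)=\dim_{\mathbb{F}}\big(f_1\wedge\mathbb{F}\{\Delta\}_j\big)={\rm rank}\big((\cdot\,f_1)\colon\mathbb{F}\{\Delta\}_j\to\mathbb{F}\{\Delta\}_{j+1}\big), \]
the generic rank of linear multiplication in degree $j$. Dually, $(\cdot\,e_v)$ sends $\bar e_\sigma$ to $\pm\bar e_{\{v\}\cup\sigma}$, which survives exactly when $\sigma$ is a $j$-face of ${\rm lk}_\Delta v$ and yields distinct basis monomials, so ${\rm rank}\big((\cdot\,e_v)|_{\mathbb{F}\{\Delta\}_j}\big)=f_j({\rm lk}_\Delta v)$. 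Since exterior shifting preserves $f$-vectors ($S_3$), the lemma becomes the assertion that, for a near-cone, the apex variable $e_v$ attains the generic rank of linear multiplication in every degree.

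One half is standard. As $v\ast{\rm lk}_\Delta v$ is a subcomplex of $\Delta$, property $S_4$ gives ${\rm Shift}(v\ast{\rm lk}_\Delta v)\subseteq{\rm Shift}\Delta$; and because algebraic shifting sends a cone to the cone over the shift of its base (a basic fact of Kalai's theory, \cite{k}), the left-hand side equals $u_1\ast{\rm Shift}({\rm lk}_\Delta v)$. Taking links of $u_1$ yields the containment
\[ {\rm Shift}({\rm lk}_\Delta v)\subseteq{\rm lk}_{{\rm Shift}\Delta}u_1, \]
equivalently $f_j({\rm lk}_\Delta v)\le f_j({\rm lk}_{{\rm Shift}\Delta}u_1)$ for all $j$ (the generic rank always dominates the special rank of $e_v$). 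This containment, not merely the resulting $f$-vector inequality, will be used at the very end.

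The content is the reverse inequality $f_j({\rm lk}_{{\rm Shift}\Delta}u_1)\le f_j({\rm lk}_\Delta v)$, i.e. that $e_v$ is rank-maximal among linear forms. By rank–nullity this amounts to bounding the generic rank from above, equivalently producing, for a generic $f_1$, a subspace of $\mathbb{F}\{\Delta\}_j$ of dimension at least $f_j(\Delta)-f_j({\rm lk}_\Delta v)$ that is annihilated by $f_1$. Here the near-cone hypothesis is essential: the defining property that every face can be pushed onto the apex (so that the boundary of every facet lies in $v\ast{\rm lk}_\Delta v$) should be exploited to manufacture, degree by degree, enough independent relations $f_1\wedge\xi=0$ out of cycles supported on ${\rm ast}_\Delta v$. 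I would organize this through the short exact sequence
\[ 0\longrightarrow\mathbb{F}\{{\rm lk}_\Delta v\}(-1)\xrightarrow{\ \cdot\,e_v\ }\mathbb{F}\{\Delta\}\longrightarrow\mathbb{F}\{{\rm ast}_\Delta v\}\longrightarrow0, \]
where $(-1)$ denotes the grading shift, comparing the behaviour of the generic form $f_1$ with that of the special form $e_v$ across this sequence.

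I expect the delicate point, and the main obstacle, to be precisely the verification that the near-cone structure forces the generic kernel to be this large, i.e. that no extra independent faces of ${\rm lk}_{{\rm Shift}\Delta}u_1$ can appear beyond those coming from ${\rm lk}_\Delta v$. This is exactly the step that distinguishes near-cones, where equality holds, from general complexes, where the inequality of the second paragraph is strict, as one already sees for two disjoint edges, whose shift has $f_1({\rm lk}_{{\rm Shift}\Delta}u_1)=2>1=f_1({\rm lk}_\Delta v)$. Once the rank bound is in hand it gives $f_j({\rm lk}_{{\rm Shift}\Delta}u_1)=f_j({\rm lk}_\Delta v)$ for all $j$; combining this with the containment of the second paragraph, and noting that an inclusion of simplicial complexes whose face-numbers agree in every cardinality must be an equality, yields ${\rm lk}_{{\rm Shift}\Delta}u_1={\rm Shift}({\rm lk}_\Delta v)$.
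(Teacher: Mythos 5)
Your proposal is not a proof of Lemma~\ref{nevo}; it is a correct reduction plus the easy half, with the half that carries all the content left as a declared intention. What you establish rigorously is fine: the identification of $f_j({\rm lk}_{{\rm Shift}\Delta}u_1)$ with the rank of generic multiplication $(\cdot f_1)$ on $\mathbb{F}\{\Delta\}_j$ (the lex-minimality argument for $u_1$ is correct), the computation ${\rm rank}\,(\cdot e_v)|_{\mathbb{F}\{\Delta\}_j}=f_j({\rm lk}_{\Delta}v)$, the inequality $f_j({\rm lk}_{{\rm Shift}\Delta}u_1)\geq f_j({\rm lk}_{\Delta}v)$ by semicontinuity, the containment ${\rm Shift}({\rm lk}_{\Delta}v)\subseteq{\rm lk}_{{\rm Shift}\Delta}u_1$ via $S_4$ and coning, and the final observation that a containment with matching $f$-vectors is an equality. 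But the reverse inequality --- that for a near-cone the coordinate form $e_v$ already attains the \emph{generic} rank in every degree --- is precisely Nevo's theorem, and for it you offer only a program: a short exact sequence and the statement that the near-cone property ``should be exploited to manufacture $\ldots$ enough independent relations,'' which you yourself label ``the main obstacle.'' Nothing in the proposal shows how the hypothesis (every facet's boundary lies in $v\ast{\rm lk}_{\Delta}v$) produces $f_j(\Delta)-f_j({\rm lk}_{\Delta}v)$ independent elements of $\ker(\cdot f_1)$ for generic $f_1$. Note that this direction can never come from semicontinuity, since the generic rank is the \emph{maximal} rank of a linear form; an upper bound on it requires genuinely new input, and your own two-disjoint-edges example shows the bound is false without the near-cone hypothesis. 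So the decisive step is missing, not merely compressed.

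For context, the paper does not prove this lemma either: it quotes it verbatim from Nevo \cite[Corollary 5.3]{n} and uses it as a black box (the paper's own contributions are Proposition~\ref{skl}, Lemma~\ref{link}, and what follows). So you are attempting a self-contained reproof of Nevo's result, and the attempt stops exactly where his work begins. One further caution if you do try to complete it along these lines: the ``basic fact'' you invoke for the easy containment --- that shifting takes a cone to the cone over the shift of its base --- appears in Nevo's paper as Corollary 5.4, i.e.\ \emph{after} and as a consequence of the near-cone theorem you are trying to prove; to avoid circularity you must cite an independent proof of the cone case (one is in Kalai's survey \cite{k}).
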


The following Lemma shows that exterior algebraic shifting commutes with link in the case of low dimensional skeleta of $i$-near-cones.

\begin{lem} \label{link}
Let $\Delta$ be an $i$-near-cone with ${\rm dim} \Delta\geq 2i-2$ having
minimal facet cardinality $k$.
Assume that the sequence $v_1, v_2, \ldots, v_i$ is the apex of $\Delta$. Let $F:=\{v_1, v_2, \ldots, v_i\}$. Consider ${\rm Shift} \Delta$ as having ordered vertex set
$\{u_1, \ldots, u_n\}$ and consider ${\rm Shift}({\rm lk}_{\Delta} F)$ as having ordered vertex set
$\{u_{i+1}, \ldots, u_n\}$. Then for every $s\leq k-i-1$ we have
$${\rm lk}_{{\rm Shift} \Delta^{(s)}}\{u_1, \ldots, u_i\}={\rm Shift}({\rm lk}_{\Delta^{(s)}} F).$$
\end{lem}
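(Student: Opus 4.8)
The plan is to prove the identity by induction on $i$, peeling off the first apex vertex $v_1$ with Lemma \ref{nevo} and then feeding $\Delta(1)$ into the inductive hypothesis. For the base case $i=1$ we have $F=\{v_1\}$, so the claim reduces to ${\rm lk}_{{\rm Shift}\Delta^{(s)}}u_1={\rm Shift}({\rm lk}_{\Delta^{(s)}}v_1)$. Since replacing a vertex of a face by the apex never increases its cardinality, any skeleton of a near-cone is again a near-cone; thus $\Delta^{(s)}$ is a near-cone with apex $v_1$, and the base case is exactly Lemma \ref{nevo} applied to $\Delta^{(s)}$. Here property $(S_4)$ gives ${\rm Shift}\Delta^{(s)}\subseteq{\rm Shift}\Delta$, so the ordering $u_1<\cdots<u_n$ restricts correctly.

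The step I expect to carry the real content is the identity
$${\rm lk}_{\Delta^{(s)}}v_1=\Delta(1)^{(s-1)}, \qquad s\le k-1.$$
To prove it I would use the near-cone decomposition $\Delta=(v_1\ast{\rm lk}_{\Delta}v_1)\cup\mathcal{E}$: a face lying in ${\rm ast}_{\Delta}v_1=\Delta(1)$ but not in ${\rm lk}_{\Delta}v_1$ must be one of the extra facets of $\mathcal{E}$, since the whole boundary of each such facet already lies in ${\rm lk}_{\Delta}v_1$. As every facet has cardinality at least $k$, these extra facets have dimension at least $k-1$ and hence disappear from the $(s-1)$-skeleton once $s\le k-1$; therefore $({\rm lk}_{\Delta}v_1)^{(s-1)}=({\rm ast}_{\Delta}v_1)^{(s-1)}=\Delta(1)^{(s-1)}$. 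Combined with the general identity ${\rm lk}_{\Delta^{(s)}}v_1=({\rm lk}_{\Delta}v_1)^{(s-1)}$ this gives the claim, and feeding it into Lemma \ref{nevo} yields ${\rm lk}_{{\rm Shift}\Delta^{(s)}}u_1={\rm Shift}(\Delta(1)^{(s-1)})$.

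For the inductive step ($i\ge 2$) I would apply the lemma to $\Delta(1)$, which by Definition \ref{ncone} is an $(i-1)$-near-cone with apex $v_2,\dots,v_i$. One checks the hypotheses are inherited: the minimal facet cardinality of $\Delta(1)={\rm ast}_{\Delta}v_1$ is at least $k-1$ (each of its facets is either a facet of $\Delta$ avoiding $v_1$ or of the form $G\setminus\{v_1\}$ for a facet $G\ni v_1$), one has ${\rm dim}\Delta(1)\ge 2i-3\ge 2(i-1)-2$, and the bound $s-1\le (k-1)-(i-1)-1$ follows from $s\le k-i-1$. The inductive hypothesis then reads
$${\rm lk}_{{\rm Shift}\Delta(1)^{(s-1)}}\{u_2,\dots,u_i\}={\rm Shift}({\rm lk}_{\Delta(1)^{(s-1)}}\{v_2,\dots,v_i\}),$$
where $u_2,\dots,u_i$ denote the first $i-1$ vertices of ${\rm Shift}\Delta(1)^{(s-1)}$; by the previous paragraph this complex equals ${\rm lk}_{{\rm Shift}\Delta^{(s)}}u_1$, whose inherited ordering is $u_2<u_3<\cdots$.

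It remains to assemble the two sides via transitivity of links, ${\rm lk}_{\Gamma}(A\cup B)={\rm lk}_{{\rm lk}_{\Gamma}A}B$. Substituting ${\rm Shift}\Delta(1)^{(s-1)}={\rm lk}_{{\rm Shift}\Delta^{(s)}}u_1$ rewrites the left side of the inductive identity as ${\rm lk}_{{\rm Shift}\Delta^{(s)}}\{u_1,u_2,\dots,u_i\}$, while ${\rm lk}_{\Delta^{(s)}}v_1=\Delta(1)^{(s-1)}$ rewrites its right side as ${\rm Shift}({\rm lk}_{\Delta^{(s)}}F)$, giving the desired equality. Apart from the key identity above, the point needing care is the vertex-ordering bookkeeping: at each level one must confirm that the canonical order the shift assigns to the smaller complex is the restriction of the order on the larger one, so that the vertices labelled $u_2,\dots,u_i$ are genuinely the images of the first $i-1$ vertices under the iterated use of Lemma \ref{nevo}.
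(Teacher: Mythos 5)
Your proof is correct, and it shares the paper's overall frame --- induction on $i$, Nevo's result (Lemma \ref{nevo}), and transitivity of links --- but the step that carries the weight is genuinely different. The paper keeps the ambient complex fixed: it applies the inductive hypothesis to $\Delta$ itself, viewed as an $(i-1)$-near-cone with apex $v_1,\ldots,v_{i-1}$, and then peels off the \emph{last} apex vertex $v_i$; for this it must know that ${\rm lk}_{\Delta^{(s)}}\{v_1,\ldots,v_{i-1}\}$ is a near-cone with apex $v_i$, which is exactly what Proposition \ref{skl} supplies. You instead peel off the \emph{first} apex vertex $v_1$ and recurse into the anti-star $\Delta(1)$, and the load is carried by your identity ${\rm lk}_{\Delta^{(s)}}v_1=\Delta(1)^{(s-1)}$ for $s\le k-1$, valid because the extra facets in the near-cone decomposition have cardinality at least $k$ and so disappear from low skeleta. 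Your route therefore bypasses Proposition \ref{skl} entirely, replacing it by this more elementary observation; the price is the inheritance check for $\Delta(1)$ (minimal facet cardinality at least $k-1$, dimension at least $2i-3$, skeleton parameter $s-1$), which you carry out correctly. The paper's route avoids any inheritance check, since all links are taken inside the fixed complex $\Delta^{(s)}$, but it leans on Proposition \ref{skl}, which the paper proves anyway for independent use. One small point of bookkeeping: both arguments need the identification of ${\rm Shift}(\Delta^{(s)})$ with $({\rm Shift}\Delta)^{(s)}$ and the compatibility of vertex orderings; the clean reference is \cite[Corollary 2.4]{w}, which the paper cites in its base case, and which is a sharper tool than the containment ${\rm Shift}(\Delta^{(s)})\subseteq{\rm Shift}\Delta$ you extract from $(S_4)$.
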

\begin{proof}
We prove the lemma by induction on $i$. The case $i=1$ is trivial by Lemma \ref{nevo} and \cite[Corollary 2.4]{w}. So assume that $i\geq 2$. Now
$${\rm lk}_{{\rm Shift} \Delta^{(s)}}\{u_1, \ldots, u_i\}= {\rm lk}_{{\rm lk}_{{\rm Shift} \Delta^{(s)}}\{u_1, \ldots, u_{i-1}\}}u_i.$$
By induction hypothesis
$${\rm lk}_{{\rm Shift} \Delta^{(s)}}\{u_1, \ldots, u_{i-1}\}={\rm Shift}({\rm lk}_{\Delta^{(s)}} \{v_1, \ldots, v_{i-1}\}).$$
Therefore
$${\rm lk}_{{\rm Shift} \Delta^{(s)}}\{u_1, \ldots, u_i\}= {\rm lk}_{{\rm Shift}({\rm lk}_{\Delta^{(s)}} \{v_1, \ldots, v_{i-1}\})}u_i.$$
Now Proposition \ref{skl} implies that ${\rm lk}_{\Delta^{(s)}} \{v_1, \ldots, v_{i-1}\}$ is a near-cone with apex $v_i$. Thus by Lemma \ref{nevo}
$${\rm lk}_{{\rm Shift}({\rm lk}_{\Delta^{(s)}} \{v_1, \ldots, v_{i-1}\})}u_i={\rm Shift}({\rm lk}_{{\rm lk}_{\Delta^{(s)}} \{v_1, \ldots, v_{i-1}\}}v_i)$$
$$={\rm Shift}({\rm lk}_{\Delta^{(s)}} F).$$
\end{proof}

The following proposition is an immediate consequence of Lemma \ref{link} and \cite[Corollary 2.4]{w}.

\begin{prop} \label{fvect}
Let $\Delta$ be an $i$-near cone with ${\rm dim} \Delta\geq 2i-2$ having
minimal facet cardinality $k$.
Assume that the sequence $v_1, v_2, \ldots, v_i$ is the apex of $\Delta$. Let $F:=\{v_1, v_2, \ldots, v_i\}$. Consider ${\rm Shift} \Delta$ as having ordered vertex set
$\{u_1, \ldots, u_n\}$ and consider ${\rm Shift}({\rm lk}_{\Delta} F)$ as having ordered vertex set
$\{u_{i+1}, \ldots, u_n\}$. Then for every $r$ with $r\leq k-2i$
$$f_r({\rm lk}_{{\rm Shift} \Delta}\{u_1, \ldots, u_i\})=f_r({\rm Shift}({\rm lk}_{\Delta} F)).$$
\end{prop}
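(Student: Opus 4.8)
The plan is to reduce the asserted equality between invariants of the full complexes ${\rm Shift}\Delta$ and ${\rm Shift}({\rm lk}_{\Delta} F)$ to the corresponding equality for their $s$-skeleta with $s:=r+i-1$, where Lemma \ref{link} applies verbatim. The observation that makes this work is purely combinatorial bookkeeping: an $r$-face $\tau$ of the link of the $i$-element set $\{u_1,\ldots,u_i\}$ (respectively of $F$) is exactly a set with $|\tau|=r$, $\tau\cap\{u_1,\ldots,u_i\}=\emptyset$ and $\tau\cup\{u_1,\ldots,u_i\}$ a face of cardinality $r+i$, that is, of dimension $r+i-1=s$. Hence whether such a union is a face is already decided inside the $s$-skeleton.

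First I would rewrite the left-hand side. Since each counted union has cardinality exactly $r+i$, it lies in ${\rm Shift}\Delta$ if and only if it lies in $({\rm Shift}\Delta)^{(s)}$, so that
$$f_r({\rm lk}_{{\rm Shift}\Delta}\{u_1,\ldots,u_i\}) = f_r\bigl({\rm lk}_{({\rm Shift}\Delta)^{(s)}}\{u_1,\ldots,u_i\}\bigr).$$
By \cite[Corollary 2.4]{w}, exterior algebraic shifting commutes with taking skeleta, i.e. $({\rm Shift}\Delta)^{(s)}={\rm Shift}(\Delta^{(s)})$, and the right-hand side becomes $f_r\bigl({\rm lk}_{{\rm Shift}(\Delta^{(s)})}\{u_1,\ldots,u_i\}\bigr)$.

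Next I would note that the hypothesis $r\leq k-2i$ is precisely the condition $s=r+i-1\leq k-i-1$ needed to invoke Lemma \ref{link}, which gives ${\rm lk}_{{\rm Shift}(\Delta^{(s)})}\{u_1,\ldots,u_i\}={\rm Shift}({\rm lk}_{\Delta^{(s)}} F)$. Applying the $f$-vector invariance $(S_3)$ at both ends, it then remains only to verify $f_r({\rm lk}_{\Delta^{(s)}} F)=f_r({\rm lk}_{\Delta} F)$. This follows from the same cardinality count: an $r$-face of ${\rm lk}_{\Delta} F$ corresponds to a face $\tau\cup F$ of dimension $s$, which belongs to $\Delta$ if and only if it belongs to $\Delta^{(s)}$. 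Chaining the equalities
$$f_r({\rm lk}_{{\rm Shift}\Delta}\{u_1,\ldots,u_i\})=f_r({\rm lk}_{\Delta^{(s)}} F)=f_r({\rm lk}_{\Delta} F)=f_r({\rm Shift}({\rm lk}_{\Delta} F))$$
yields the proposition.

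The argument is indeed "immediate" once Lemma \ref{link} and the skeleton-commutation of shifting are granted; the only delicate point—and the step I expect to be the main obstacle—is the index bookkeeping. One must keep straight the shift between cardinality $r$ in the link and dimension $r+i-1$ in the ambient complex, and check that the single inequality $r\leq k-2i$ simultaneously guarantees both that $s=r+i-1$ lies in the admissible range $s\leq k-i-1$ of Lemma \ref{link} and that restricting to the $s$-skeleton loses none of the $(r+i)$- and $r$-faces being counted.
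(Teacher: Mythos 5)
Your proof is correct and is exactly the argument the paper intends: the paper states Proposition \ref{fvect} as an ``immediate consequence of Lemma \ref{link} and \cite[Corollary 2.4]{w}'' without writing out details, and your chain of equalities (pass to the $(r+i-1)$-skeleton, commute shifting with skeleta via \cite[Corollary 2.4]{w}, apply Lemma \ref{link}, and use $(S_3)$ together with the observation that all faces being counted have dimension exactly $r+i-1$) is precisely that omitted verification, including the correct matching of the hypothesis $r\leq k-2i$ with the range $s\leq k-i-1$ of Lemma \ref{link}.
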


We are now ready to prove the main result of this section. By applying exterior algebraic shifting we prove:

\begin{thm} \label{main}
If $\Delta$ is an $i$-near-cone, then $\Delta$
is ($i,r$)-EKR for every $i\leq r$ with ${\rm depth}_{\mathbb{F}}\Delta\geq (i+1)(r-i+1)-1$.
\end{thm}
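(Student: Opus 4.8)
The plan is to reduce to the shifted case by applying exterior algebraic shifting to the intersecting family, and then to transport the resulting bound back to $\Delta$ using the fact, already established in Proposition \ref{fvect}, that shifting commutes with taking the link of the apex. Throughout I write $F=\{v_1,\dots,v_i\}$ for the apex of $\Delta$ and let $\{u_1,\dots,u_n\}$ be the ordered vertex set of ${\rm Shift}\Delta$. First I would dispose of the trivial case $r=i$: an $i$-intersecting $i$-family consists of pairwise equal sets and so has at most one element, while $f_0({\rm lk}_{\Delta}\sigma)=1$ for any $i$-face $\sigma$ (and such a face exists, since the facets have cardinality larger than $i$); hence the inequality holds and I may assume $r>i$.

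Next I would record the numerical consequences of the depth hypothesis. By the characterizations recalled in Section \ref{sec2}, ${\rm depth}_{\mathbb{F}}\Delta$ equals the minimal facet dimension of ${\rm Shift}\Delta$ and is at most the minimal facet dimension of $\Delta$. Writing $k$ for the minimal facet cardinality of $\Delta$, the assumption ${\rm depth}_{\mathbb{F}}\Delta\geq (i+1)(r-i+1)-1$ therefore forces both $k$ and the minimal facet cardinality of ${\rm Shift}\Delta$ to be at least $(i+1)(r-i+1)$. A one-line computation gives $(i+1)(r-i+1)\geq r+i$ whenever $r>i$, so $r-i\leq k-2i$; and since $\dim\Delta\geq k-1\geq (i+1)(r-i+1)-1\geq 2i-2$, the dimensional hypotheses of Lemma \ref{face}, Lemma \ref{link}, and Proposition \ref{fvect} are all satisfied. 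In particular $F$ is a genuine $i$-face of $\Delta$.

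With these preliminaries in place, let $\mathcal{A}$ be an $i$-intersecting $r$-family of faces of $\Delta$. By properties $(S_1),(S_3),(S_4),(S_5)$ of exterior algebraic shifting, ${\rm Shift}\mathcal{A}$ is a shifted $i$-intersecting $r$-family of faces of ${\rm Shift}\Delta$ with $|{\rm Shift}\mathcal{A}|=|\mathcal{A}|$. Since ${\rm Shift}\Delta$ is shifted with minimal facet cardinality at least $(i+1)(r-i+1)$, Lemma \ref{shifted} applied with $t=i$ (more precisely, the bound established in its proof, which places the extremal link at the initial vertices) yields
$$|\mathcal{A}| = |{\rm Shift}\mathcal{A}| \leq f_{r-i}({\rm lk}_{{\rm Shift}\Delta}\{u_1,\dots,u_i\}).$$
Proposition \ref{fvect}, applicable because $r-i\leq k-2i$, identifies the right-hand side with $f_{r-i}({\rm Shift}({\rm lk}_{\Delta}F))$, which by $(S_3)$ equals $f_{r-i}({\rm lk}_{\Delta}F)$; and since $F$ is an $i$-face, this is at most $\max_{\sigma}f_{r-i}({\rm lk}_{\Delta}\sigma)$ over all $i$-faces $\sigma$. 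Chaining these relations gives $|\mathcal{A}|\leq \max_{\sigma}f_{r-i}({\rm lk}_{\Delta}\sigma)$, which is precisely the statement that $\Delta$ is $(i,r)$-EKR.

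The conceptual heart of the argument, and the only genuinely hard step, is the commutation of algebraic shifting with the apex link packaged in Proposition \ref{fvect} (resting in turn on Lemma \ref{link} and Nevo's Lemma \ref{nevo}); granting that, the present proof is essentially bookkeeping. The one place demanding care is the translation between the depth hypothesis and the facet-cardinality bounds needed to invoke Lemma \ref{shifted} and Proposition \ref{fvect} simultaneously — in particular verifying $r-i\leq k-2i$ and $\dim\Delta\geq 2i-2$ from the single inequality on ${\rm depth}_{\mathbb{F}}\Delta$, together with the observation that the $r=i$ boundary case, where these estimates degenerate, is trivial.
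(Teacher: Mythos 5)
Your proposal is correct and follows essentially the same route as the paper's own proof: shift $\mathcal{A}$ using $(S_3)$ and $(S_5)$, bound $|{\rm Shift}\mathcal{A}|$ by $f_{r-i}({\rm lk}_{{\rm Shift}\Delta}\{u_1,\dots,u_i\})$ via Lemma \ref{shifted}, and transfer back to $f_{r-i}({\rm lk}_{\Delta}F)$ via Proposition \ref{fvect}, after checking $r-i\leq k-2i$ and $\dim\Delta\geq 2i-2$ from the depth hypothesis. Your treatment is in fact slightly more careful than the paper's in spelling out the trivial case $r=i$ and in noting that the bound used is the one at the initial vertices established inside the proof of Lemma \ref{shifted}.
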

\begin{proof}
The case $r=i$ is trivial. So assume that $r>i$. Then $\dim \Delta \geq {\rm depth}_{\mathbb{F}}\Delta \geq (i+1)(r-i+1)-1\geq 2i+1>2i-2$.
 Therefore using the notations from Definition \ref{ncone}, Lemma \ref{face} implies that $F= \{v_1, \ldots, v_i\}$ is a face of $\Delta$. Let $\mathcal{A}$ be an $i$-intersecting $r$-family of faces of $\Delta$.
We show that $|\mathcal{A}|\leq f_{r-i}({\rm lk}_{\Delta} F)$, where $F$ is the apex of $\Delta$.
Apply algebraic shifting and consider ${\rm Shift}\Delta$ as having ordered vertex set
$\{u_1, \ldots, u_n\}$. ${\rm Shift}\mathcal{A}$ is an $i$-intersecting $r$-family of faces
of ${\rm Shift}_{\mathbb{F}}\Delta$ with $|{\rm Shift}\mathcal{A}| = |\mathcal{A}|$ by $S_3$ and $S_5$. By \cite[Corollary 4.5]{d} and the definition of depth, the minimum facet cardinality of ${\rm Shift}\Delta$
is ${\rm depth}_{\mathbb{F}}\Delta+1$, hence
$$|\mathcal{A}|\leq f_{r-i}({\rm lk}_{{\rm Shift}\Delta} \{u_1, \ldots, u_i\}) = f_{r-i}({\rm lk}_\Delta F),$$
by Lemma \ref{shifted} and Proposition \ref{fvect}. Note that since $r>i$, the assumption implies that $k\geq {\rm depth}_{\mathbb{F}}\Delta+1\geq (i+1)(r-i+1)= i(r-i)+r+1>r+i$, where $k$ is the minimum facet cardinality of $\Delta$. Hence $r-i < k-2i$ and thus Proposition \ref{fvect} is applicable here.
\end{proof}

The following corollary settles Conjecture \ref{con} in the case of $i$-near-cones for some special class of parameters and it is a consequence of Theorem \ref{main} and its proof.

\begin{cor} \label{conje}
Let $i\leq r$ be two integers. Assume that $\Delta$ is an $i$-near-cone such that ${\rm depth}_{\mathbb{F}}\Delta\geq (i+1)(r-i+1)-1$ and suppose that  $S\neq \emptyset$ is a subset of $[i, r]$. Then every $i$-intersecting family $\mathcal{A}$
 of faces of $\Delta$ with $\mathcal{A}\subseteq \bigcup_{s\in S}\Delta_{(s)}$ satisfies the following inequality:
$$|\mathcal{A}| \leq \max \sum_{s\in S}f_{s-i}({\rm lk}_{\Delta}\sigma),$$
where the maximum is taken all over $i$ faces of $\Delta$.
\end{cor}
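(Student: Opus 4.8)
The plan is to mirror the derivation of Theorem~\ref{proof} from Lemma~\ref{shifted}, but with Theorem~\ref{main} playing the role that Lemma~\ref{shifted} played there. First I would write $\mathcal{A}$ as the disjoint union of the families $\mathcal{A}_{(s)}:=\mathcal{A}\cap\Delta_{(s)}$ for $s\in S$; this is legitimate because every face has a well-defined cardinality and $\mathcal{A}\subseteq\bigcup_{s\in S}\Delta_{(s)}$. Each $\mathcal{A}_{(s)}$ is then an $s$-family of faces of $\Delta$, and since $\mathcal{A}_{(s)}\subseteq\mathcal{A}$ it inherits the $i$-intersecting property.

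Next I would verify that Theorem~\ref{main} applies to each $\mathcal{A}_{(s)}$ with the role of $r$ played by $s$. The only hypothesis to check is the depth inequality ${\rm depth}_{\mathbb{F}}\Delta\geq(i+1)(s-i+1)-1$. Since $s\in S\subseteq[i,r]$ we have $i\leq s\leq r$, and the function $s\mapsto(i+1)(s-i+1)$ is nondecreasing, so
$$(i+1)(s-i+1)-1\leq(i+1)(r-i+1)-1\leq{\rm depth}_{\mathbb{F}}\Delta.$$
Hence Theorem~\ref{main} is applicable to each cardinality level $s\in S$.

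The crucial point---and the step I expect to be the only genuine subtlety---is that I would invoke not merely the $(i,s)$-EKR \emph{conclusion} of Theorem~\ref{main}, but the sharper bound established \emph{inside} its proof: that proof shows directly that $|\mathcal{A}_{(s)}|\leq f_{s-i}({\rm lk}_{\Delta}F)$ for the single apex face $F=\{v_1,\ldots,v_i\}$, rather than just $|\mathcal{A}_{(s)}|\leq\max_{\sigma}f_{s-i}({\rm lk}_{\Delta}\sigma)$. Because $F$ is the \emph{same} apex for every $s$, one common $i$-face bounds all of the summands simultaneously; without this uniformity one could only bound $|\mathcal{A}|$ by a sum of separate maxima and could not pass to a single maximum of a sum, which is what inequality $(\ast)$ demands.

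Finally I would sum over $s\in S$ and use that $F$ is itself an $i$-face of $\Delta$:
$$|\mathcal{A}|=\sum_{s\in S}|\mathcal{A}_{(s)}|\leq\sum_{s\in S}f_{s-i}({\rm lk}_{\Delta}F)\leq\max_{\sigma}\sum_{s\in S}f_{s-i}({\rm lk}_{\Delta}\sigma),$$
where the maximum runs over all $i$-faces $\sigma$ of $\Delta$. This is precisely inequality $(\ast)$ specialized to $t=i$, which settles Conjecture~\ref{con} for $i$-near-cones under the stated depth condition.
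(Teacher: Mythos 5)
Your proof is correct and is essentially the paper's own argument: the paper proves this corollary by repeating the proof of Theorem~\ref{proof} with Theorem~\ref{main} in place of Lemma~\ref{shifted}, i.e., decomposing $\mathcal{A}$ into the levels $\mathcal{A}_{(s)}$ (after checking, as you do, that the depth hypothesis holds for each $s\leq r$) and bounding each level by $f_{s-i}({\rm lk}_{\Delta}F)$ for the common apex face $F=\{v_1,\ldots,v_i\}$. Your observation that one must invoke the uniform apex-face bound from inside the proof of Theorem~\ref{main}, rather than merely its $(i,s)$-EKR conclusion (which would only yield a sum of separate maxima), is precisely the point the paper signals by stating that the corollary is a consequence of ``Theorem~\ref{main} and its proof.''
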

\begin{proof}
The proof is similar to the proof if Theorem \ref{proof}. Just one should use Theorem \ref{main} instead of Lemma \ref{shifted}.
\end{proof}

The following corollary is an immediate consequence of Corollary \ref{conje} and proves Conjecture \ref{con} in the case of sequentially Cohen-Macaulay
$i$-near-cones for $t=i$. Note that if $\Delta$ is sequentially Cohen-Macaulay over $\mathbb{F}$ then
${\rm depth}_{\mathbb{F}}\Delta$ is the minimum facet dimension of $\Delta$.

\begin{cor} \label{scm}
Let $i\leq r$ be two integers. Assume that $\Delta$ is a sequentially Cohen-Macaulay  $i$-near-cone having
minimal facet cardinality $k\geq (i+1)(r-i+1)$ and suppose that  $S\neq \emptyset$ is a subset of $[i, r]$. Then every $i$-intersecting family $\mathcal{A}$
 of faces of $\Delta$ with $\mathcal{A}\subseteq \bigcup_{s\in S}\Delta_{(s)}$ satisfies the following inequality:
$$|\mathcal{A}| \leq \max \sum_{s\in S}f_{s-i}({\rm lk}_{\Delta}\sigma),$$
where the maximum is taken all over $i$ faces of $\Delta$.
\end{cor}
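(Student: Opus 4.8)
The plan is to deduce this directly from Corollary \ref{conje}, whose hypothesis is phrased in terms of the depth invariant ${\rm depth}_{\mathbb{F}}\Delta$ rather than the minimal facet cardinality. So the entire task reduces to translating the sequential Cohen-Macaulay assumption together with the cardinality bound $k\geq(i+1)(r-i+1)$ into the depth inequality ${\rm depth}_{\mathbb{F}}\Delta\geq(i+1)(r-i+1)-1$ demanded by Corollary \ref{conje}, after which that corollary applies verbatim.

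First I would recall the depth identity established in Section \ref{sec2}: if $\Delta$ is sequentially Cohen-Macaulay over $\mathbb{F}$, then ${\rm depth}_{\mathbb{F}}\Delta$ equals the minimum facet dimension of $\Delta$. Since dimension is one less than cardinality, the minimum facet dimension is exactly $k-1$, where $k$ is the minimum facet cardinality. Hence
$${\rm depth}_{\mathbb{F}}\Delta = k-1.$$
Combining this with the standing hypothesis $k\geq(i+1)(r-i+1)$ gives immediately
$${\rm depth}_{\mathbb{F}}\Delta = k-1 \geq (i+1)(r-i+1)-1,$$
which is precisely the depth condition appearing in the statement of Corollary \ref{conje}.

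With this inequality in hand, Corollary \ref{conje} applies to $\Delta$ for the given $i\leq r$ and the given nonempty $S\subseteq[i,r]$, and its conclusion is exactly the desired bound $|\mathcal{A}|\leq \max\sum_{s\in S}f_{s-i}({\rm lk}_{\Delta}\sigma)$ over all $i$-faces $\sigma$. There is essentially no obstacle here: all the genuine work — the shifting-commutes-with-link computation of Lemma \ref{link}, the $f$-vector identity of Proposition \ref{fvect}, and the reduction to the shifted case via Theorem \ref{main} — has already been carried out in proving Corollary \ref{conje}. The only point requiring care is the bookkeeping between facet \emph{cardinality} and facet \emph{dimension}, and the invocation of the correct depth characterization for sequentially Cohen-Macaulay complexes; once those are aligned the corollary follows at once.
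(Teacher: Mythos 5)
Your proof is correct and takes exactly the same route as the paper: the paper also deduces Corollary \ref{scm} immediately from Corollary \ref{conje}, invoking the fact that a sequentially Cohen-Macaulay complex satisfies ${\rm depth}_{\mathbb{F}}\Delta = k-1$ (the minimum facet dimension), so that $k\geq(i+1)(r-i+1)$ yields the required depth hypothesis.
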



\section*{Acknowledgments}

This work was done while the author visited Philipps-Universit${\rm \ddot{a}}$t Marburg supported by DAAD. The author thanks
Professor Volkmar Welker for useful discussions during the preparation of the article. He also thanks Professors Peter Borg and Russ Woodroofe for reading an
earlier version of this article and for their helpful comments.



\end{document}